\def\d{{\partial}}
\def\eps{\varepsilon}
\def\C{{\mathbb C}}
\def\R{{\mathbb R}}
\def\N{{\mathbb N}}
\def\Sch{{\mathcal S}}
\def\Re{\text{\rm Re}}
\def\Tend#1#2{\mathop{\longrightarrow}\limits_{#1\rightarrow#2}}
\def\Tendweak#1#2{\mathop{\rightharpoonup}\limits_{#1\rightarrow#2}}
\def\<{{\langle}}
\def\>{{\rangle}}
\def\({\left(}
\def\){\right)}
\theoremstyle{plain}
\newtheorem{theorem}{Theorem}[section]
\newtheorem{definition}[theorem]{Definition}
\newtheorem{lemma}[theorem]{Lemma}
\newtheorem{proposition}[theorem]{Proposition}
\theoremstyle{remark}
\newtheorem{remark}[theorem]{Remark}
\newtheorem{example}[theorem]{Example}
\def\ech{\color{black}\,}
\numberwithin{equation}{section}
\begin{document}

\title[Hartree system]{On the Cauchy problem for the Hartree
  approximation in quantum dynamics}

\author[R. Carles]{R\'emi Carles}
\address[R. Carles]{Univ Rennes, CNRS\\ IRMAR - UMR 6625\\ F-35000
  Rennes, France}
\email{Remi.Carles@math.cnrs.fr}

\author[C. Fermanian]{Clotilde Fermanian Kammerer}
\address[C. Fermanian Kammerer]{
Univ Paris Est Creteil, CNRS, LAMA, F-94010 Creteil, France\\ 
Univ Gustave Eiffel, LAMA, F-77447 Marne-la-Vallée, France}
\email{clotilde.fermanian@u-pec.fr}

\author[C. Lasser]{Caroline Lasser}
\address[C. Lasser]{Technische Universit\"at M\"unchen, Zentrum Mathematik, Deutschland}
\email{classer@ma.tum.de}

\begin{abstract}
We prove existence and uniqueness results for  the time-dependent Hartree approximation 
arising in quantum dynamics. The Hartree equations of motion form a coupled system
of nonlinear  Schr\"odinger equations  for the evolution of product state approximations. 
They are a prominent example for dimension reduction in the context of the the time-dependent 
Dirac--Frenkel variational principle. 
  We handle  the case of Coulomb potentials thanks to Strichartz
estimates. Our main result addresses a general setting where the 
nonlinear coupling cannot be considered as a perturbation.
 The proof uses a recursive construction that  is inspired by the 
standard approach for the Cauchy problem associated to symmetric quasilinear hyperbolic equations.
\end{abstract}

\thanks{RC was supported by Rennes M\'etropole through its AIS
  program, and by Centre Henri Lebesgue, program ANR-11-LABX-0020-0. 
   CL was supported by the Centre for Advanced Study in Oslo, Norway, research project
\emph{Attosecond Quantum Dynamics Beyond the Born-Oppenheimer Approximation\ech}.
}

\maketitle


\section{Introduction}
\label{sec:intro}

We  consider the time-dependent Schr\"odinger equation
\begin{equation}
  \label{eq:exact}
  i\d_t \psi = H\psi, 
\end{equation}
where the total Hamiltonian is given by
\begin{equation*}
   H=H_x+H_y+w(x,y), \quad H_x  =- \frac 12 \Delta_x +V_1(x) ,\quad H_y
  = -\frac 12 \Delta_y +V_2(y)
\end{equation*}
with $x\in \R^{d_1}$ and $y\in \R^{d_2}$, $d_1,d_2\ge 1$. 
The potentials $V_1,V_2$ and $w$ are always real-valued, we will make
extra regularity and decay assumptions later on\ech. 
It is common wisdom that for dealing with quantum systems ``a solution of the 
wave equation in many-dimensional space is far too complicated to be practicable'' 
(Dirac 1930) and one aims at approximative methods that effectively reduce the space dimension. 
Here, we focus on initial data that decouple the space variables,
\[\psi(0,x,y)=\phi^x_0(x)\phi^y_0(y).\]
Such a form of initial data indeed  suggests a dimension reduction approach. Of course, if there is no coupling ($w(x,y)=0$), the full solution is itself a product state $\psi(t,x,y)= \phi_x(t,x)\phi_y(t,y)$, with 
\begin{equation*}
\begin{cases}
i\partial_t \phi^x= H_x \phi^x,\quad\phi^x(0,x)=\phi^x_0(x),\\
i\partial_t \phi^y= H_y \phi^y ,\quad\phi^y(0,y)=\phi^y_0(y).
\end{cases}
\end{equation*}
When the coupling is present, one seeks for approximate solutions $u(t)\approx\psi(t)$ of product form $u(t,x,y)= \phi^x(t,x)\phi^y(t,y)$ in order to reduce the initial system~\eqref{eq:exact}  in $\R^{d_1+d_2}$ to two systems on spaces of smaller dimensions, $\R^{d_1}$ and $\R^{d_2}$. In situations, where the overall configuration space has a natural decomposition of its dimension $d_1 + \cdots + d_N$, a corresponding product ansatz of $N$ factors is sought. Here we only investigate the case $N=2$, mentioning that repeated application of the binary construction yields the more general case. \ech
Applying the time-dependent Dirac--Frenkel variational principle to the manifold
\[
\mathcal M = \left\{u = \varphi^x\otimes\varphi^y\mid 
\varphi^x\in L^2(\R^{d_1}),\ \varphi^y\in L^2(\R^{d_2})\right\}
\]
yields the so-called {\it time-dependent Hartree approximation},
\[
\psi(t,x,y)\approx \phi^x(t,x)\phi^y(t,y)\in\mathcal M,
\]
 where the pair
$(\phi^x,\phi^y)$ solves the nonlinearly coupled system
\begin{equation}\label{eq:system}
\begin{cases}
i\partial_t \phi^x= H_x \phi^x + \langle w\rangle_y \phi^x,\quad\phi^x(0,x)=\phi^x_0(x),\\
i\partial_t \phi^y= H_y \phi^y + \langle w\rangle_x \phi^y,\quad\phi^y(0,y)=\phi^y_0(y).
\end{cases}
\end{equation}
The time-dependent potentials  result from the averaging process
\begin{align}
\nonumber
&\langle w\rangle_y (t,x) :=\int_{\R^{d_2}} w(x,y) |\phi^y(t,y)|^2 dy ,\\
\nonumber
 &\langle w\rangle_x(t,y) :=\int_{\R^{d_1}} w(x,y) |\phi^x(t,x)|^2 dx,
\end{align}
under the assumption, made throughout this paper, that
\begin{equation}\label{eq:normL2}
  \|\phi_0^x\|_{L^2(\R^{d_1})} =  \|\phi_0^y\|_{L^2(\R^{d_2})} =1.
\end{equation}
For  any ``reasonable'' solution (at least with the regularity
considered in this paper), the $L^2$-norms of $\phi^x(t,\cdot)$ and
$\phi^y(t,\cdot)$, respectively, are independent of time, hence
\begin{equation*}
   \|\phi^x(t)\|_{L^2(\R^{d_1})} =  \|\phi^y(t)\|_{L^2(\R^{d_2})} =1,
 \end{equation*}
 for all $t$ in the time interval where the solution to
 \eqref{eq:system} is well-defined; see \S\ref{sec:conservations} for a proof.\ech
 \medskip 
 
 Even though the time-dependent Hartree approximation is one of the most fundamental 
 approximations in quantum dynamics, mathematical existence and uniqueness proofs are rather scarce. \ech
 Existence and uniqueness have been studied in the case where the interaction potential is of convolution type, i.e. for  
 $w(x,y)=W(x-y)$ and with one of the subsystems moving by classical mechanics (see~\cite{CancesLeBris99,Baudouin2005,CacciafestaSuzzoniNoja2020} for example).
 A related investigation has targeted the time-dependent self-consistent field system~\cite{JinSparberZhou2017}
with coupling potentials of Schwartz class. However,   
 our aim here is to discuss the existence and uniqueness of solutions
 for system~\eqref{eq:system} when the potentials $\langle w\rangle_x$
 or $\langle w\rangle_y$  need not be bounded, and cannot be considered as a perturbation of $V_2$ or $V_1$, respectively.
  This framework  requires a different approach. In particular, our result 
  provides the Cauchy theory for the systems  discussed in the articles~\cite{BCFLL21,BCFLL2} where the 
  accuracy of the Hartree approximation is studied in the broader context of composite quantum dynamics and scale separation.  
  
The steps for our existence and uniqueness proof are strongly inspired by the method which is
classical in the study of quasilinear hyperbolic systems, see e.g. \cite{AlGe07}.
With $n\in \N$, we associate the iterative scheme of recursive equations 
 \begin{equation}
\label{eq:scheme}
\left\{
\begin{aligned}
i\partial_t \phi^x_{n+1}= H_x \phi^x_{n+1} + \langle w_n\rangle_y \phi^x_{n+1},\quad \phi^x_{n+1}(0,x)= \phi^x_0(x),\\
i\partial_t \phi^y_{n+1}= H_y \phi^y_{n+1}  + \langle w_n\rangle_x
\phi^y_{n+1} ,\quad \phi^y_{n+1}(0,y)= \phi^y_0(y),
\end{aligned}
\right.
\end{equation}
with 
 \begin{equation}
\label{eq:av-scheme}
\langle w_n\rangle_y(t,x)=\int_{\R^n}w(x,y) |\phi^y_n(t,y)|^2 dy,\quad
   \langle w_n\rangle_x(t,y)=\int_{\R^d} w(x,y) |\phi^x_n(t,x)|^2 dx.
\end{equation}
The main steps of the proof of our existence and uniqueness result are then:
\begin{enumerate}
\item  The iterative scheme is well-defined  and enjoys  bounds in
  ``large'' norm, which control second order derivatives and polynomial growth of order two for some finite time horizon.\ech
\item The solution of the scheme converges in ``small''  norm, which is the $L^2$ norm.  
\item It is possible to pass to the limit $n\to+\infty$ in the equation, which leads to the construction of a solution that one then proves to be unique and global in time (provided  the initial data is regular enough). 
\end{enumerate}

\subsection{Outline}
In the next Section~\ref{sec:var}, we recall elementary properties of the time-dependent variational principle and formally derive the Hartree equations \eqref{eq:system}. Then we discuss coupling potentials $w(x,y)$ of Coulombic and of polynomial type in Section~\ref{sec:result}, where we also 
present our main result Theorem~\ref{theo:main}, which establishes existence and uniqueness of the 
solutions to the Hartree system for coupling with polynomial growth. The different steps of the proof of  Theorem~\ref{theo:main} are the subject of Section~\ref{sec:scheme} (analysis of the iterative scheme), Section~\ref{sec:convergence} (convergence in small norms) and Section~\ref{sec:solution} (passing to the limit). A sufficient condition for the growth of the coupling potential is verified in Section~\ref{sec:sufficient}.
The Appendices~\ref{sec:tangent} and \ref{sec:coulomb} summarize some technical arguments. \ech

\subsection{Notations}
We write $L^\infty_T$ for $L^\infty([0,T])$. The notations $L^2_x$, $L^2_y$, $L^2_{x,y}$ stand
for $L^2(\R^{d_1}_x)$, $L^2(\R^{d_2}_y)$, $L^2(\R^{d_1+d_2}_{x,y})$, respectively. We denote by $\<\cdot,\cdot\>_{L^2_x}$, $\<\cdot,\cdot\>_{L^2_y}$, $\<\cdot,\cdot\>_{L^2_{x,y}}$ the corresponding inner products.
 For $f,g\ge0$, we write $f\lesssim g$ whenever there exists a
``universal'' constant (in the sense that it does not depend on time, space,
or $n$, typically) such that $f\le Cg$.


\section{Variational principle}\label{sec:var}
The time-dependent Hartree approximation results from the Dirac--Frenkel variational principle applied to the manifold
\begin{equation}\label{def:M}
\mathcal M = \left\{u = \varphi^x\otimes\varphi^y\mid 
\varphi^x\in L^2_x,\ \varphi^y\in L^2_y\right\},
\end{equation}
 see also \cite[\S II.3.1]{LubichBlue} for the analogous discussion with Hartree products of $N$ orbitals 
in $L^2(\R^3)$. The reader can also refer to \cite{LasserSu2022}. The principle determines an approximate solution 
$u(t)\in\mathcal M$ for the time-dependent Schr\"odinger equation
\[
i\partial_t\psi = H\psi
\]
with initial data $\psi(0)\in\mathcal M$ by requiring that for all times $t$
\begin{equation}\label{eq:var_cond}
\left\{
\begin{array}{l}
\partial_t u(t)\in\mathcal T_{u(t)}\mathcal M,\\*[1ex]
\langle v, i\partial_t u(t) - Hu(t)\rangle = 0\quad\text{for all}\quad v\in\mathcal T_{u(t)}\mathcal M,
\end{array}
\right.
\end{equation}
 where $\mathcal T_{u(t)}\mathcal M$ denotes the tangent space of $\mathcal M$ at $u(t)$. \ech
For deriving the Hartree equations, we first have to understand the manifold $\mathcal M$ and its tangent space. Note that the representation of a Hartree function is non-unique, since $\varphi^x \otimes\varphi^y = 
(a\varphi^x)\otimes(a^{-1}\varphi^y)$ for any $a\in\C\setminus\{0\}$. However, 
we can have unique representations in the tangent space once appropriate gauge conditions are set. 

\begin{lemma}[Tangent space]\label{lem:tangent} For any $u = \varphi^x\otimes\varphi^y\in\mathcal M$, $u\neq 0$,
\[
\mathcal T_u\mathcal M = \left\{v^x\otimes\varphi^y + \varphi^x\otimes v^y\mid v^x\in L^2_x,\ v^y\in L^2_y\right\}.
\]
Any $v\in\mathcal T_u\mathcal M$ has a unique representation of the form $v=v^x\otimes\varphi^y + \varphi^x\otimes v^y$, if we impose the gauge condition
$\langle \varphi^x,v^x \rangle = 0$. The tangent spaces are complex linear subspaces of $L^2_{x,y}$ such that $u\in\mathcal T_u\mathcal M$ for all $u\in\mathcal M$.
\end{lemma}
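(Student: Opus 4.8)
The plan is to prove the three assertions of Lemma~\ref{lem:tangent} in turn: the explicit description of $\mathcal T_u\mathcal M$, the uniqueness of the representation under the gauge condition, and the claim that $u\in\mathcal T_u\mathcal M$ and that $\mathcal T_u\mathcal M$ is a complex linear subspace of $L^2_{x,y}$.

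First I would compute the tangent space directly from the definition of $\mathcal M$ as the image of the smooth map $\Phi:(\varphi^x,\varphi^y)\mapsto \varphi^x\otimes\varphi^y$. Its differential at $(\varphi^x,\varphi^y)$, applied to an increment $(v^x,v^y)$, is $v^x\otimes\varphi^y+\varphi^x\otimes v^y$ by the Leibniz rule, so the tangent space is exactly the set $\{v^x\otimes\varphi^y+\varphi^x\otimes v^y\mid v^x\in L^2_x,\ v^y\in L^2_y\}$ claimed. One should remark that, strictly speaking, $\mathcal M\setminus\{0\}$ is a (Banach) submanifold — the map $\Phi$ is an immersion away from the origin, and the non-uniqueness $\varphi^x\otimes\varphi^y=(a\varphi^x)\otimes(a^{-1}\varphi^y)$ accounts precisely for the one complex-dimensional kernel of $d\Phi$ coming from the scaling $(\varphi^x,\varphi^y)\mapsto(a\varphi^x,a^{-1}\varphi^y)$. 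This is what makes the gauge condition natural. For the linear-subspace statement, note that the set is visibly closed under addition and under multiplication by $\lambda\in\C$, since $\lambda(v^x\otimes\varphi^y)=(\lambda v^x)\otimes\varphi^y$; and taking $v^x=\varphi^x$, $v^y=0$ gives $u\in\mathcal T_u\mathcal M$.

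Next I would treat the uniqueness of the representation under the gauge $\langle\varphi^x,v^x\rangle_{L^2_x}=0$. Suppose $v^x\otimes\varphi^y+\varphi^x\otimes v^y=\tilde v^x\otimes\varphi^y+\varphi^x\otimes\tilde v^y$ with both pairs satisfying the gauge condition. Writing $a^x=v^x-\tilde v^x$ and $a^y=v^y-\tilde v^y$, we have $a^x\otimes\varphi^y=-\varphi^x\otimes a^y$ in $L^2_{x,y}$. Testing this identity against $\varphi^x$ in the $x$-variable, i.e. pairing with $\varphi^x\otimes g$ for arbitrary $g\in L^2_y$ (equivalently, applying $\langle\varphi^x,\cdot\rangle_{L^2_x}$), the left side gives $\langle\varphi^x,a^x\rangle_{L^2_x}\,\varphi^y=0$ because $a^x$ inherits the gauge condition, while the right side gives $-\|\varphi^x\|_{L^2_x}^2\,a^y$. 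Since $u\neq0$ forces $\varphi^x\neq0$, we get $a^y=0$, and then $a^x\otimes\varphi^y=0$ with $\varphi^y\neq0$ gives $a^x=0$. Hence $v^x=\tilde v^x$ and $v^y=\tilde v^y$. (Existence of such a gauged representation follows by the standard trick: given any representation, replace $v^x$ by $v^x-\langle\varphi^x,v^x\rangle_{L^2_x}\varphi^x/\|\varphi^x\|_{L^2_x}^2$ and compensate in $v^y$.)

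I do not expect a serious obstacle here; the only point requiring a little care is the bookkeeping for the tensor-product identities in $L^2_{x,y}$ and the verification that the gauged representation exists, not merely that it is unique — but both are routine once one fixes the normalization $\|\varphi^x\|_{L^2_x}=1$ (consistent with \eqref{eq:normL2}), which makes the projection formulas clean. If a fully rigorous manifold statement is wanted rather than a formal one, the mildly delicate part is checking that $\Phi$ restricted to a suitable slice (e.g. $\|\varphi^x\|_{L^2_x}=1$ together with a phase normalization) is a genuine embedding with closed split range; but for the purposes of the Dirac--Frenkel computation in \eqref{eq:var_cond} the explicit description of $\mathcal T_u\mathcal M$ above is all that is used.
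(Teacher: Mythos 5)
Your proposal is correct and follows essentially the same route as the paper: the tangent space is identified by differentiating the product parametrization (curves in the paper, the differential of $(\varphi^x,\varphi^y)\mapsto\varphi^x\otimes\varphi^y$ in your write-up), the gauged representation is obtained by subtracting the $\varphi^x$-component and compensating in the $y$-factor, and uniqueness follows by pairing with $\varphi^x$ in the $x$-variable, with $u\in\mathcal T_u\mathcal M$ checked by an explicit choice of $(v^x,v^y)$. The only cosmetic differences are that you phrase uniqueness via the difference of two representations and add remarks on the immersion/submanifold structure, neither of which changes the argument.
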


The lemma is proved in Appendix~\ref{sec:tangent}. The following  formal arguments show that, in case that the variational solution $u(t)$ is well-defined and sufficiently regular,  the $L^2$ norm and the energy expectation value  are conserved  automatically. Indeed,\ech
we differentiate with respect to time $t$ and use the variational condition~\eqref{eq:var_cond} for $v=u(t)$, 
\[
\frac{d}{dt}\|u(t)\|^2_{L^2_{x,y}} = 2\,\mathrm{Re}\<u(t),\partial_t u(t)\>_{L^2_{x,y}} = 
2\,\mathrm{Re}\< u(t),\tfrac{1}{i} Hu(t)\>_{L^2_{x,y}} = 0,
\]
due to self-adjointness of the Hamiltonian. Similarly, using self-adjointness and the variational condition~\eqref{eq:var_cond} for 
$v=\partial_tu(t)$, 
\[
\frac{d}{dt}\langle u(t),Hu(t)\rangle_{L^2_{x,y}} = 2\,\mathrm{Re}\langle \partial_t u(t),H u(t)\rangle_{L^2_{x,y}} = 
2\,\mathrm{Re}\langle \partial_t u(t),i\partial_t u(t)\rangle_{L^2_{x,y}} = 0.
\] 
Let us now  formally derive the Hartree system~\eqref{eq:system}. We write 
\[
u(t)=\varphi^x(t)\otimes \varphi^y(t),
\]
 with $\| \varphi^x(t)\|_{L^2_x}=\| \varphi^y(t)\|_{L^2_y}=1$.  
 We have 
\begin{align*}
i\partial_ t u & =( i\partial_t \varphi^x(t))\otimes \varphi^y(t) + \varphi^x(t)\otimes (i\partial_t  \varphi^y(t) ),\\
Hu & = H_x \varphi^x(t))\otimes \varphi^y(t) + \varphi^x(t)\otimes (H_y \varphi^y(t) ) + w(x,y) \varphi^x(t) \otimes \varphi^y(t).
\end{align*}
Choosing elements  $v=v^x\otimes\varphi^y + \varphi^x\otimes v^y\in\mathcal T_{u(t)}\mathcal M$ and evaluating~\eqref{eq:var_cond}, we obtain the following necessary and sufficient conditions:\ech
\begin{itemize}
\item[(i)] If $v^y$=0, we obtain that for all $v^x\in L^2_x$ such that $\langle v^x ,\varphi^x(t)\rangle_{L^2_x}=0$,
\[
\langle v^x, (i\partial_t -H_x) \varphi^x(t)\rangle_{L^2_{x}} = \int_{\R^{d_1+d_2}} w(x,y)v^x(x)\varphi^x(t,x) |\varphi^y(t,y)|^2 dxdy.
\]
\item[(ii)] If $v^x$=0, we obtain that for all $v^y\in L^2_y$ such that $\langle v^y ,\varphi^y(t)\rangle_{L^2_y}=0$,
\[
\langle v^y, (i\partial_t -H_y) \varphi^y(t)\rangle_{L^2_y} = \int_{\R^{d_1+d_2}} w(x,y)v^y(y)\varphi^y(t,y) |\varphi^x(t,x)|^2 dxdy.
\]
\end{itemize}
The choice of $
\varphi^x(t)$ and $\varphi^y(t)$ satisfying the Hartree system~\eqref{eq:system} guarantees (i) and (ii).

\section{Main result}\label{sec:result}
We present existence and uniqueness results for the solution of the time-dependent Hartree system \eqref{eq:system}. 
In \S\ref{subsec:coulomb}, we discuss how Strichartz estimates may be applied to Coulombic coupling. 
In \S\ref{subsec:poly}, we give detailed assumptions on polynomial growth conditions. Then, in \S\ref{sec:main-result} we state our 
main result Theorem~\ref{theo:main}. 
 
\subsection{Coupling potentials of Coulombic form}\label{subsec:coulomb}

The case of Coulomb singularities (in combination with classical nuclear dynamics) has already 
been addressed in~\cite{CancesLeBris99,Baudouin2005} by Schauder and Picard fixed point 
arguments, respectively.
We briefly revisit the main result from  \cite{CancesLeBris99}, and
show how it may be adapted thanks to 
Strichartz estimates.
We suppose $d_1=d_2=3$, and have in mind the case
\begin{equation}\label{eq:wCoulomb}
  w(x,y) =\frac{\eps}{|x-y|},\quad \eps\in \R.
\end{equation}
We consider more generally the case $w(x,y) = W(x-y)$, for a possibly
singular $W$.
We assume that the potentials $V_1$ and $V_2$ are perturbations of
smooth and at most quadratic potentials:
\begin{equation*}
  V_j = {\mathbf V}_j+ v_j,
\end{equation*}
where 
\begin{equation*}
{\mathbf V}_j\in \mathcal Q=\left\{V\in
                 C^\infty(\R^3;\R),\ \d^\alpha V\in L^\infty(\R^3),\
                 \forall |\alpha|\ge 2\right\},
\end{equation*}
and
\begin{equation*}
  v_j\in L^p(\R^3)+L^\infty(\R^3), \quad\text{for some }p>3/2.
\end{equation*}
Typically, we may consider Coulomb potentials, as 
\begin{equation*}
  \frac{1}{|x|}=\frac{1}{|x|}{\mathbf 1}_{|x|<1}+
 \frac{1}{|x|}{\mathbf 1}_{|x|\ge 1}.
\end{equation*}
The first term on the right hand side belongs to $L^p(\R^3)$ for any
$1\le p<3$, and the second term is obviously bounded. We then make the
same assumption on $W$.
Denote 
\begin{equation*}
  \mathbf H_x=-\frac{1}{2}\Delta_x +\mathbf V_1,\quad \mathbf H_y=-\frac{1}{2}\Delta_y +\mathbf V_2.
\end{equation*}
Then  $e^{-it\mathbf H_x}$ and
$e^{-it\mathbf H_y}$ enjoy Strichartz estimates, and \eqref{eq:system} can be
solved at the $L^2$ level, by a straightforward adaptation of \cite[Corollary~4.6.5]{CazCourant}:

\begin{theorem}\label{theo:CoulombStrichartz}
  Assume $d_1=d_2=3$, $\mathbf V_1,\mathbf V_2\in \mathcal Q$, $v_1,v_2,W\in
  L^p(\R^3)+L^\infty(\R^3)$ for some $p>3/2$,  and 
  $\phi_0^x,\phi_0^y\in L^2(\R^3)$. Then \eqref{eq:system} has a
  unique solution $(\phi^x,\phi^y)\in  C(\R;L^2(\R^3))^2\cap
  L^q_{\rm loc}(\R;L^r(\R^3))^2$,
where $1=2/r+1/p$ and $q$ is such that
\begin{equation*}
 \frac{2}{q}=3\(\frac{1}{2}-\frac{1}{r}\). 
\end{equation*}
The $L^2$-norms of $\phi^x$ and $\phi^y$ are independent of $t\in
\R$, hence in view of \eqref{eq:normL2},
\begin{equation*}
  \|\phi^x(t)\|_{L^2(\R^{3})}=\|\phi^y(t)\|_{L^2(\R^{3})}=1,\quad
  \forall t\in \R.
\end{equation*}
\end{theorem}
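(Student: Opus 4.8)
The plan is to solve \eqref{eq:system} by a fixed-point argument at the $L^2$ level, treating the perturbations $v_1,v_2$ and the self-consistent potentials $\langle w\rangle_y,\langle w\rangle_x$ as inhomogeneous source terms for the unitary groups $e^{-it\mathbf H_x}$ and $e^{-it\mathbf H_y}$, and then to promote the local solution to a global one using the conservation of the $L^2$-norms. Two ingredients are needed. The first is that, since $\mathbf V_1,\mathbf V_2\in\mathcal Q$ are smooth and at most quadratic, the operators $\mathbf H_x,\mathbf H_y$ are (essentially) self-adjoint and the groups $e^{-it\mathbf H_x},e^{-it\mathbf H_y}$ satisfy, on any bounded time interval, the same homogeneous and retarded Strichartz estimates as the free Schr\"odinger group on $\R^3$ (parametrix construction for quadratic potentials; cf. Appendix~\ref{sec:coulomb}). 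The second, and the only genuinely structural point, is that the self-consistent potentials are no worse than the external ones.

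Indeed, write $W=W_1+W_2$ with $W_1\in L^p(\R^3)$, $W_2\in L^\infty(\R^3)$. Since $w(x,y)=W(x-y)$, the averaging becomes a convolution, $\langle w\rangle_y(t,\cdot)=W*|\phi^y(t,\cdot)|^2$, and Young's inequality together with $\||\phi^y(t)|^2\|_{L^1_y}=\|\phi^y(t)\|_{L^2_y}^2$ gives
\[
\|W_1*|\phi^y(t)|^2\|_{L^p_x}\le\|W_1\|_{L^p}\,\|\phi^y(t)\|_{L^2_y}^2,\qquad
\|W_2*|\phi^y(t)|^2\|_{L^\infty_x}\le\|W_2\|_{L^\infty}\,\|\phi^y(t)\|_{L^2_y}^2 ,
\]
so that $\langle w\rangle_y(t,\cdot)\in L^p(\R^3)+L^\infty(\R^3)$ with a norm controlled by the (conserved) mass of $\phi^y$; symmetrically for $\langle w\rangle_x$. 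Likewise the map $\phi^y\mapsto\langle w\rangle_y$ is Lipschitz from $L^2_y$ into $L^p_x+L^\infty_x$, since $\||\phi^y|^2-|\tilde\phi^y|^2\|_{L^1_y}\le\big(\|\phi^y\|_{L^2_y}+\|\tilde\phi^y\|_{L^2_y}\big)\|\phi^y-\tilde\phi^y\|_{L^2_y}$. Thus $v_1+\langle w\rangle_y$ and $v_2+\langle w\rangle_x$ play exactly the role of a potential in $L^p+L^\infty$ with $p>3/2$, depending Lipschitz-continuously on the unknowns.

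Next I would run the fixed-point argument of \cite[Corollary~4.6.5]{CazCourant} verbatim, with $e^{it\Delta/2}$ replaced by $e^{-it\mathbf H_x},e^{-it\mathbf H_y}$. Fix the admissible pair $(q,r)$ of the statement: $1=2/r+1/p$ forces $\tfrac1{r'}=\tfrac1r+\tfrac1p$ (H\"older in space: $L^p\cdot L^r\hookrightarrow L^{r'}$), and $p>3/2$ forces $q>2$ strictly, hence a gain $T^{1-2/q}>0$ from H\"older in time on $[0,T]$; the $L^\infty$ parts of $v_j$ and of $\langle w\rangle$ are absorbed using the $(\infty,2)$ retarded estimate with a gain $T$ from $L^1_T$. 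On the ball of radius $2C\big(\|\phi_0^x\|_{L^2}+\|\phi_0^y\|_{L^2}\big)$ in $\big(C([0,T];L^2)\cap L^q([0,T];L^r)\big)^2$, the Duhamel map
\[
(\phi^x,\phi^y)\longmapsto\Big(e^{-it\mathbf H_x}\phi_0^x-i\!\int_0^t\! e^{-i(t-s)\mathbf H_x}\big[(v_1+\langle w\rangle_y)\phi^x\big](s)\,ds,\ \cdots\Big)
\]
is then, for $T$ small enough, a contraction; combined with the Lipschitz estimate for the coupling this yields a unique solution in this class, and the usual bootstrap puts it in $C([0,T];L^2)\cap L^{\tilde q}([0,T];L^{\tilde r})$ for every admissible pair $(\tilde q,\tilde r)$.

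Finally, globalization and conservation. Taking the inner product of \eqref{eq:system} (in Duhamel or differential form) with $\phi^x$ and using that $e^{-it\mathbf H_x}$ is unitary and $v_1+\langle w\rangle_y$ is real shows $\|\phi^x(t)\|_{L^2_x}=\|\phi_0^x\|_{L^2_x}=1$, and likewise for $\phi^y$ (the detailed computation is the content of \S\ref{sec:conservations}). Because the length $T$ of the local existence interval produced above depends only on $\|v_j\|_{L^p}$, $\|v_j\|_{L^\infty}$, $\|W_1\|_{L^p}$, $\|W_2\|_{L^\infty}$ and on the conserved masses, the solution can be continued by steps of equal length; the blow-up alternative (if the maximal time were finite, $\|\phi^x(t)\|_{L^2_x}+\|\phi^y(t)\|_{L^2_y}$ would be unbounded there) then forces a solution on all of $\R$. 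I expect the only non-routine point to be the verification that the self-consistent coupling stays in $L^p+L^\infty$ with norms controlled by conserved quantities, i.e. that the nonlinearity does not worsen the functional framework set by $V_1,V_2$; once this is in place the argument is a transcription of the scalar $L^2$-theory, the local Strichartz estimates for $e^{-it\mathbf H}$ being used as a black box.
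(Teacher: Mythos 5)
Your proposal is correct and follows the same overall strategy as the paper's proof: Duhamel's formula for $e^{-it\mathbf H_x}$, $e^{-it\mathbf H_y}$, local-in-time Strichartz estimates for at most quadratic $\mathbf V_1,\mathbf V_2$ used as a black box, a contraction on a ball of the Strichartz space on $[0,T]$ with $T$ small depending only on the potentials and the data, and globalization via conservation of the $L^2$-norms. The one genuine difference is how the Hartree term is estimated. You bound the self-consistent potential itself, $\langle w\rangle_y=W\ast|\phi^y|^2$, in $L^p+L^\infty$ by Young's inequality $L^p\ast L^1\subset L^p$, so only the $L^\infty_TL^2$ norm of $\phi^y$ enters, and the coupling becomes a time-dependent potential in $L^p+L^\infty$ depending Lipschitz-continuously (in that space) on the unknowns; the linear $L^2$-theory of \cite[Corollary~4.6.5]{CazCourant} then applies with the single admissible pair $(q,r)$ of the statement. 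The paper instead estimates $W^p\ast|\phi^y|^2$ in $L^{k}([0,T];L^{2p})$ via Young with exponents $1+\tfrac{1}{2p}=\tfrac1p+\tfrac2{r_1}$, which requires the auxiliary admissible pair $(q_1,r_1)$ and the Strichartz norm $L^{q_1}_TL^{r_1}$ of $\phi^y$, and then needs the comparison of exponents ($r_1<r$, $q_1>q$) in the uniqueness step. Your route is slightly more economical: it uses only the mass (a conserved and metric-controlled quantity), avoids the second pair altogether, and makes uniqueness in the stated class $C(\R;L^2)^2\cap L^q_{\rm loc}L^r{}^2$ immediate; what it buys the paper's version in exchange is nothing essential here, both being closed by the same contraction and continuation argument. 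The remaining ingredients you invoke (justification of $L^2$-conservation at this low regularity, e.g. by regularization or by the Strichartz-based argument cited in the paper) are treated at the same level of detail as in the paper.
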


The proof is presented shortly in Appendix~\ref{sec:coulomb}.

\begin{remark}
The sign of $\eps$ in \eqref{eq:wCoulomb}  plays no role here. Indeed,
  the proof relies on local in time Strichartz estimates associated to
  $\mathbf H_x$ and $\mathbf H_y$, respectively, and the potentials
  $v_1,v_2$ and $W$ are treated as perturbations, whose sign is
  irrelevant in order to guarantee the above global existence
  result. On the other hand, Theorem~\ref{theo:CoulombStrichartz}
  brings no information regarding the quality of the dynamics or the existence of a ground state.
\end{remark}

\begin{remark}\label{rem:energy} Under extra assumptions on the potentials $v_1$ and $v_2$ (no extra
  assumption is needed for $W$, as it is associated to a convolution), it is
  possible to consider higher regularity properties. In particular,
  working at the level of $H^1$-regularity makes it possible to show
  the conservation of the energy
 \begin{align*}
  E(t)&=\langle H_x\phi^x(t),\phi^x(t)\rangle_{L^2_x}+
        \langle H_y\phi^y(t),\phi^y(t)\rangle_{L^2_y}\\
  &\quad
+  \iint_{\R^{3}\times\R^{3}}W(x-y)|\phi^x(t,x)|^2|\phi^y(t,y)|^2dxdy,
 \end{align*}
 provided that $\nabla v_1$ and $\nabla v_2$ also belong to
 $L^p(\R^3)+L^\infty(\R^3)$ for some $p>3/2$.
 We refer to Remark~\ref{rem:coulombH1} for more details.
 \end{remark}
 
 \begin{remark}
The role of the set $\mathcal Q$ is to guarantee that (local in
   time) Strichartz estimates are available for  $\mathbf H_x$ and
   $\mathbf H_y$. The same would still be true for a larger class of
   potentials, including for instance Kato potentials (\cite{RodnianskiSchlag}) or
   potentials decaying like an inverse square (\cite{BPST04}). The
   choice of this set $\mathcal Q$ is made in order to simplify the
   presentation, and because it is delicate to keep track of all the
   classes of potentials for which Strichartz estimates have been proved.
\end{remark}

\subsection{Coupling potentials with polynomial growth}\label{subsec:poly}
The core of this paper addresses the case where the coupling potential
$w$ may grow polynomially. To be more concrete, we recall the example 
addressed in \cite{BCFLL2}.

 \begin{example}\label{ex:companion} 
 Assume $d_1=d_2=1$ and that  the potentials are given by  
   \begin{equation*}
      V_1(x)= \frac 12 x^2 \left( \frac x{2\ell} -1\right)^2,\quad
      \ell>0,\quad  V_2(y)= \frac {\omega^2}2 y^2,\quad 
  w(x,y)  = \chi(x) y^2,\;\;\chi\in C_0^\infty(\R).
 \end{equation*}
 Here, $V_1(x)$ corresponds to a double well and $V_2(y)$ to a harmonic bath. The coupling $w(x,y)$ 
 could be locally cubic when choosing $\chi(x) = x$ for $x$ in a neighborhood of zero.
  \end{example}
  
We emphasize that in Example~\ref{ex:companion}, the average
$\<w\>_x$ grows quadratically in $y$: 
in terms of growth, $\<w\>_x$ is
comparable to~$V_2$ and cannot be considered as a perturbation as far as the Cauchy
problem is concerned. This setting  turns out to be very different from the one 
in~\cite{CancesLeBris99,Baudouin2005} (see also Theorem~\ref{theo:CoulombStrichartz}), 
and requires a different approach  to be developed below.

\subsubsection{Restriction to non-negative potentials}
\label{sec:hyp}
In the general case, we assume $d_1,d_2\ge 1$. First, 
 the potentials $V_1$ and $V_2$ are smooth, real-valued, $V_1\in
 C^\infty(\R^{d_1};\R)$, $V_2\in 
   C^\infty(\R^{d_2};\R)$, and bounded from below: 
\begin{equation*}
\forall x\in \R^{d_1}, \forall y\in \R^{d_2},\;\;
V_1(x)\ge -C_1\;\;\mbox{and}\;\;  V_2(y) \ge - C_2,
\end{equation*}
for some constants $C_1,C_2>0$. The operators $H_x$ and $H_y$ then are
self-adjoint operators. Up to changing $\phi^x(t,x)$ to
$\phi^x(t,x)e^{itC_1}$ (which amounts to replacing $V_1$ by
$V_1+C_1$ in \eqref{eq:system}), and  $\phi^y(t,y)$ to
$\phi^y(t,y)e^{itC_2}$, we may actually assume
\begin{equation*}
\tag{\textbf{H1}}
V_1(x)\ge 1,\ \forall x\in \R^{d_1}, \quad\text{and} \quad
V_2(y) \ge 1,\ \forall y\in \R^{d_2},
\end{equation*}
as we are only interested in existence results for the Cauchy problem
\eqref{eq:system}. 
Thus $H_x$ and $H_y$ are sums of a nonnegative operator (Laplacian in
$x$ and $y$, respectively) and of a nonnegative potential. We use them
to measure the regularity of the solutions of the
system~\eqref{eq:system}.

\subsubsection{Functional setting}
For $k\in\N$, we define the Hilbert spaces
\begin{align*}
\mathcal H_x^k & =\left\{ \phi\in L^2(\R^{d_1}),\;\; H^{k/2}_x \phi\in L^2(\R^{d_1})\right\} \;\;\mbox{and}\;\;
\mathcal H_y^k =\left\{ \phi\in L^2(\R^{d_2}),\;\; H^{k/ 2}_y \phi\in L^2(\R^{d_2})\right\} ,
\end{align*}
which are the natural analogues of Sobolev spaces $H^k$ in the
presence of (nonnegative) potentials (in view of {\bf (H1)}), equipped with the norms given by
\begin{equation*}
  \|\phi\|_{\mathcal H_x^k}^2 = \|\phi\|_{L^2_x}^2 + \| H^{k/2}_x
  \phi\|_{L^2_x}^2 ,\quad
\|\phi\|_{\mathcal H_y^k}^2 = \|\phi\|_{L^2_y}^2 + \| H^{k/2}_y
  \phi\|_{L^2_y}^2.
\end{equation*}
For $\alpha,\beta\in \N$,  $\Phi=(\phi^x,\phi^y) \in \mathcal
H_x^\alpha\times \mathcal 
H_y^\beta $, 
we set 
\[\| \Phi \|_{\alpha,\beta}^2=\|\phi^x\|_{\mathcal H_x^\alpha}^2 +
  \|\phi^y\|_{\mathcal H_y^\beta}^2 =
  \|\phi^x\|_{L^2_x}^2+ \|  H_x^{\alpha/2}
  \phi^x\|_{L^2_x}^2 + \|\phi^y\|_{L^2_y}^2+   \|  H_y^{\beta/2} \phi^y\|_{L^2_y}^2.\] 
  All along the paper, we use  that in view of {\bf (H1)},
  $0\le H_x^\alpha\le
H_x^{\alpha+1}$ and $0\le H_y^\beta\le
H_y^{\beta+1}$. 
\smallskip

As \eqref{eq:system} is reversible, from now on we consider positive
time only. 
We shall work with the time-dependent functional spaces
\[
X^{\alpha,\beta}_T= \left\{ \Phi(t)=(\phi^x(t),\phi^y(t)) ,\; \phi^x \in L^\infty\([0,T], \mathcal H_x^\alpha\),\;\phi^y \in L^\infty\([0,T], \mathcal H_y^\beta\)\right\}.
\]
If $\Phi=(\phi^x,\phi^y)\in X^{\alpha,\beta}_T$, we set 
\[
\| \Phi \|_{X_T^{\alpha,\beta}}=\sup_{t\in[0,T]} \| \Phi(t)\|_{\alpha,\beta}.
\] 
We choose to consider integer exponents 
 $\alpha$ and $\beta$ for the sake
of simplicity. We emphasize
however that our approach requires $\alpha,\beta\ge 2$; see
Section~\ref{sec:scheme} for a  more precise discussion on this
aspect.  We note that Theorem~\ref{theo:main}  
allows $\alpha=\beta=2$.

\subsubsection{Main assumptions}
We assume that the coupling potential $w\in C^\infty(\R^{d_1+d_2};\R)$
satisfies
\smallskip

\noindent \textbf{(H2) } 
There exist $c_0,C>0$ with $c_0<1$ such that for all
$(x,y)\in\R^{d_1}\times \R^{d_2}$,  
\begin{equation*}
  |w(x,y)| \le c_0( V_1(x) + V_2(y) +C).
\end{equation*}
We emphasize that  no condition is required
concerning the above constant~$C$: for instance if $V_1$
and $V_2$ are bounded, then we may always pick $c_0<1$ so that
{\bf (H2)} is satisfied. For unbounded potentials, the requirement
$c_0<1$ can be understood as some smallness property, in the sense
that $w(x,y)$ is a perturbation of $V_1(x)+V_2(y)$. This actually
corresponds to the physical framework where the system
\eqref{eq:system} is introduced in order to approximate the exact
solution $\psi$ of \eqref{eq:exact} through the formula $\psi\approx
\phi^x\otimes\phi^y$; see \cite{BCFLL21} for a derivation of error
estimates. We also note that the assumption $c_0<1$ implies that $w$
is $(H_x+H_y)$-bounded with relative bound $c_0<1$, hence by
Kato--Rellich Theorem (see
e.g. \cite[Theorem~X.12]{ReedSimon2}), $H$ is self-adjoint.

\begin{remark}
  If $w$ is at most quadratic, in the sense that $w\in C^\infty (\R^{d_1+d_2};\R)$
  \begin{equation*}
    \d_{x,y}^\gamma w\in L^\infty(\R^{d_1+d_2}),\quad \forall \gamma\in
    \N^{d_1+d_2},\ |\gamma|\ge 2,
  \end{equation*}
  then the assumption {\bf (H2)} is not needed to guarantee that $H$ is
  self-adjoint (see the Faris--Lavine Theorem,
  \cite[Theorem~X.38]{ReedSimon2}). Such  a framework corresponds to
  the assumptions made for the error analysis in \cite{BCFLL21,BCFLL2}. For such
  potentials $w$, the assumption $c_0<1$ in {\bf (H2)} is needed only in order to
  ensure that the  Hartree  solutions are global in time.
\end{remark}
\smallskip
\noindent We also assume some conditions on the regularity of 
commutators of the coupling potential with the operators $H_x$ and
$H_y$. For integers $\alpha,\beta \ge 1$, we consider the condition:
\smallskip

\noindent \textbf{(H3)}$_{\alpha,\beta}$.  
There exist $c_1,c_2>0$ such that for all $k\in\{1,\cdots, \alpha\}$, $\ell\in\{1,\cdots, \beta\}$, for all $f_j=f_j(x)$, $g_j=g_j(y)$  in
the Schwartz class ($j\in\{1,2\}$),
\begin{align*}
   \left| \< H_x^{k-1} [w(\cdot,y) , H_x] f_1, f_2\>_{L^2_x} \right| &
   +  \left| \<  [w(\cdot,y) , H_x]  H_x^{k-1} f_1, f_2\>_{L^2_x} \right|\\ 
   &  \le c_1\(1+
  V_2(y)\)\| f_1 \|_{\mathcal H_x^{k}}\|f_2  \|_{\mathcal H_x^{k}},
   \quad \text{for a.a. }y\in \R^{d_2},\\
   \left| \< H_y^{\ell-1}  [w(x,\cdot) , H_y] g_1,g_2\>_{L^2_y} \right| 
& + \left| \<  [w(x,\cdot) , H_y] H_y^{\ell-1} g_1,g_2\>_{L^2_y}  \right|\\
&  \le c_2\( 1+
  V_1(x) \)\|g_1\|_{\mathcal H_y^{\ell}} \|g_2\|_{\mathcal H_y^{\ell}} , 
   \quad \text{for a.a. }x\in \R^{d_1}.
\end{align*}
Assumption \textbf{(H3)}$_{\alpha,\beta}$ is made in order to
generalize the framework of 
 Example~\ref{ex:companion}. The subsequent proofs do not use the
 special form of the  
 Hamiltonians $H_x$, $H_y$, so that our result extends as soon as they
 are self-adjoint operators and assumptions \textbf{(H1)},
 \textbf{(H2)}, \textbf{(H3)}$_{\alpha,\beta}$ are satisfied. This
 applies in particular for magnetic Schr\"odinger operators.

\begin{remark}
  It is not necessary to assume that the potential $w$ is smooth,
  $w\in C^\infty(\R^{d_1+d_2};\R)$. We only need enough regularity in
  order to write assumption ${\bf (H3)}_{\alpha,\beta}$ for the $\alpha$ and
  $\beta$ that we consider (recalling that $H_x^{k-1}$ and
  $H_y^{\ell-1}$ are self-adjoint). For example, if $w\in
  C^2(\R^{d_1+d_2};\R)$, then Theorem~\ref{theo:main} holds with
    $\alpha=\beta=2$.  We make this regularity assumption for 
  simplicity, as the most important properties are those discussed in
  this subsection. 
\end{remark}

\begin{remark}
Whenever $w(x,y)$ is a Coulomb potential as in
 Section~\ref{subsec:coulomb}, the assumptions~\textbf{(H3)}$_{\alpha,\beta}$
 are not satisfied. One then needs to take advantage of the
 convolution feature of the coupling and of the properties of $H_x$
 and $H_y$ such as the Strichartz estimates in Proposition~\ref{prop:strichartz}.   
\end{remark}

We next present sufficient conditions on the potentials guaranteeing that assumptions
  {\bf (H2)} and {\bf(H3)}$_{2,2}$ hold.

\begin{lemma}[Sufficient conditions]\label{ex:extra}
Let $V_1\in C^\infty(\R^{d_1};\R)$ and $V_2\in C^\infty(\R^{d_2};\R)$
such that  $V_1,V_2\geq 0$.
  The above assumptions
  {\bf (H2)} and {\bf(H3)}$_{2,2}$  are satisfied provided that the
  following estimates hold:
  \begin{itemize}
  \item There exists $C>0$ such that
   \begin{equation}
     \label{eq:temperance}
     |\nabla V_1(x)|\le C\(1+V_1(x)\), \forall x\in \R^{d_1};\quad
     |\nabla V_2(y)|\le C\(1+V_2(y)\), \forall y\in \R^{d_2}.
   \end{equation}
   \item There exist  $0<c_0<1$ and $c>0$  independent of $x\in \R^{d_1}$
     and $y\in \R^{d_2}$ such that  
   \begin{equation}
     \label{hyp:stronger}
     \begin{cases}
  & |w(x,y)|\le c_0( V_1(x)+V_2(y)+c),\\
&|\nabla_x w(x,y)| \le c(\sqrt{V_1(x)}+V_2(y)+1),\\
  & |\nabla_y w(x,y)|\le c( V_1(x)+\sqrt{V_2(y)}+1),\\
  & |\Delta_x w(x,y)|+ |\Delta_y w(x,y)|\le c( V_1(x)+V_2(y)+1).
\end{cases}
 \end{equation}
  \end{itemize}
 \end{lemma}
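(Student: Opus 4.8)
The plan is to verify assumptions \textbf{(H2)} and \textbf{(H3)}$_{2,2}$ directly from the pointwise bounds \eqref{eq:temperance} and \eqref{hyp:stronger}. First, \textbf{(H2)} is immediate: it is precisely the first inequality of \eqref{hyp:stronger}. So the content of the lemma is the derivation of \textbf{(H3)}$_{2,2}$, that is, for $k\in\{1,2\}$ and $\ell\in\{1,2\}$ one must control $\<H_x^{k-1}[w(\cdot,y),H_x]f_1,f_2\>_{L^2_x}$ and $\<[w(\cdot,y),H_x]H_x^{k-1}f_1,f_2\>_{L^2_x}$ by $c_1(1+V_2(y))\|f_1\|_{\mathcal H_x^k}\|f_2\|_{\mathcal H_x^k}$, and symmetrically in $y$. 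I will treat the $x$-side; the $y$-side is identical by symmetry of the hypotheses. Since $H_x=-\tfrac12\Delta_x+V_1(x)$ and $V_1(x)$ commutes with multiplication by $w(\cdot,y)$, the commutator simplifies to the first-order operator
\[
[w(\cdot,y),H_x] = -\tfrac12[w(\cdot,y),\Delta_x] = \tfrac12\bigl(\Delta_x w(\cdot,y) + 2\nabla_x w(\cdot,y)\cdot\nabla_x\bigr),
\]
a multiplication operator (by $\tfrac12\Delta_x w$) plus a vector field ($\nabla_x w\cdot\nabla_x$). The point of \eqref{hyp:stronger} is exactly that these coefficients are controlled: $|\Delta_x w(x,y)|\lesssim V_1(x)+V_2(y)+1$ and $|\nabla_x w(x,y)|\lesssim \sqrt{V_1(x)}+V_2(y)+1$, the half-power on $V_1$ in the gradient term compensating for the loss of one derivative, since $\|\sqrt{V_1}\,\nabla_x f\|_{L^2_x}\lesssim \|f\|_{\mathcal H_x^2}$ while $\|V_1 f\|_{L^2_x}\lesssim\|f\|_{\mathcal H_x^2}$ too.

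For the case $k=1$ the estimate is the most direct. Writing $[w,H_x]=\tfrac12\Delta_x w + \nabla_x w\cdot\nabla_x$ and pairing against $f_2$, one bounds
\[
\bigl|\<[w(\cdot,y),H_x]f_1,f_2\>_{L^2_x}\bigr| \le \tfrac12\|\,(\Delta_x w)\, f_1\|_{L^2_x}\|f_2\|_{L^2_x} + \|\,(\nabla_x w)\cdot\nabla_x f_1\|_{L^2_x}\|f_2\|_{L^2_x}.
\]
Using \eqref{hyp:stronger}, $\|(\Delta_x w(\cdot,y)) f_1\|_{L^2_x}\lesssim \|V_1 f_1\|_{L^2_x} + (V_2(y)+1)\|f_1\|_{L^2_x} \lesssim (1+V_2(y))\|f_1\|_{\mathcal H_x^2}$, since $\|V_1 f_1\|_{L^2_x}\le \|H_x f_1\|_{L^2_x} + \tfrac12\|\Delta_x f_1\|_{L^2_x}\lesssim\|f_1\|_{\mathcal H_x^2}$ (and $\|\Delta_x f_1\|_{L^2_x}\lesssim\|H_x f_1\|_{L^2_x}$ because $-\tfrac12\Delta_x\le H_x$, so all second-order quantities are dominated by $\|f_1\|_{\mathcal H_x^2}$). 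For the gradient term, $\|(\nabla_x w(\cdot,y))\cdot\nabla_x f_1\|_{L^2_x}\lesssim \|\sqrt{V_1}\,\nabla_x f_1\|_{L^2_x} + (V_2(y)+1)\|\nabla_x f_1\|_{L^2_x}$; the first norm squared is $\int V_1|\nabla f_1|^2 = \<(-\tfrac12\Delta_x)f_1,V_1 f_1\> + \tfrac12\int V_1 \,\mathrm{Re}(f_1\overline{\Delta f_1})$ — more cleanly, $\int V_1|\nabla f_1|^2 \le \langle H_x f_1, V_1 f_1\rangle \le \|H_x f_1\|\|V_1 f_1\|\lesssim\|f_1\|_{\mathcal H_x^2}^2$ after an integration by parts that produces only terms bounded by $\|f_1\|_{\mathcal H_x^2}^2$; and $\|\nabla_x f_1\|_{L^2_x}^2=\langle -\tfrac12\Delta_x f_1, f_1\rangle\cdot 2\le 2\|H_x f_1\|\|f_1\|\lesssim\|f_1\|_{\mathcal H_x^2}^2$. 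Hence the $k=1$ bound follows with $\|f_2\|_{L^2_x}\le\|f_2\|_{\mathcal H_x^1}$.

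The harder case is $k=2$, where one must insert $H_x$ on one side and still land in $\mathcal H_x^2$ for both factors — a net budget of only one derivative per factor for a first-order commutator. The standard trick is to rewrite $\<H_x[w,H_x]f_1,f_2\> = \<[w,H_x]f_1, H_x f_2\>$ using self-adjointness of $H_x$, so the $k=2$ estimate reduces to the $k=1$-type bound applied with $f_2$ replaced by $H_x f_2$; but then one needs $\|H_x f_2\|_{L^2_x}\le\|f_2\|_{\mathcal H_x^2}$, which holds, while on the $f_1$ side the $k=1$ argument already only used $\|f_1\|_{\mathcal H_x^2}$ — so this closes. For the twin term $\<[w,H_x]H_x f_1,f_2\>$ one instead commutes: $[w,H_x]H_x = H_x[w,H_x] - [[w,H_x],H_x]$, where the double commutator $[[w,H_x],H_x]$ is again a second-order differential operator whose coefficients involve only $\Delta_x w$, $\nabla_x(\Delta_x w)$, $\nabla_x\nabla_x w$, etc.; here is where one would, in the fully rigorous version, need control of third derivatives of $w$ — but the paper only assumed \textbf{(H3)}$_{2,2}$ via \eqref{hyp:stronger}, so I expect the actual argument avoids the double commutator by instead writing $\<[w,H_x]H_x f_1,f_2\> = \<H_x f_1, [H_x,w]f_2\> = -\<H_x f_1,[w,H_x]f_2\>$ and bounding this by $\|H_x f_1\|_{L^2_x}\cdot\|[w,H_x]f_2\|_{L^2_x}$, where the second factor is estimated exactly as in the $k=1$ case by $(1+V_2(y))\|f_2\|_{\mathcal H_x^2}$. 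This symmetric "move $H_x$ to the bare side" is the crux of the whole argument: it converts every $k=2$ quantity into a product of an $\mathcal H_x^2$-bounded factor and a first-order-commutator-applied factor, for which \eqref{hyp:stronger} suffices; the main obstacle is purely bookkeeping — tracking the integration-by-parts boundary-free terms and checking that the half-power $\sqrt{V_1}$ in the gradient bound is genuinely enough to absorb one unit of derivative without a deficit. Condition \eqref{eq:temperance} enters in justifying these integrations by parts and in controlling cross terms where $\nabla V_1$ appears (e.g. from $[\nabla_x w\cdot\nabla_x, V_1] = (\nabla_x w\cdot\nabla_x V_1)$), since $|\nabla_x w\cdot\nabla V_1|\lesssim(\sqrt{V_1}+V_2+1)\cdot(1+V_1)$, and one checks $\|\sqrt{V_1}(1+V_1)f\|$-type quantities are still dominated by $\|f\|_{\mathcal H_x^2}$ only when paired appropriately — a point to handle with care but not a genuine obstruction given \eqref{eq:temperance}. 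Finally, a density argument in the Schwartz class makes all the formal manipulations legitimate, and the $y$-estimates follow verbatim by exchanging the roles of $x$ and $y$, completing the verification of \textbf{(H3)}$_{2,2}$.
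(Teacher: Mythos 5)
Your reduction of the problem is the right one, and for $k=2$ your argument is essentially the paper's: expand $[w(\cdot,y),H_x]=\tfrac12\Delta_x w+\nabla_x w\cdot\nabla_x$, move $H_x$ to the other factor by self-adjointness for $\<H_x[w,H_x]f_1,f_2\>$, treat $\<[w,H_x]H_xf_1,f_2\>$ by the adjoint identity $\<H_xf_1,[H_x,w]f_2\>$ rather than via a double commutator (which would indeed need third derivatives of $w$ and is avoided in the paper exactly as you guessed), and control $\|\sqrt{V_1}\,\nabla f\|_{L^2_x}$ by an integration by parts in which \eqref{eq:temperance} absorbs the $\nabla V_1$ cross term. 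The identification of {\bf (H2)} with the first line of \eqref{hyp:stronger} is also correct.

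There is, however, a genuine gap at $k=1$. Assumption {\bf (H3)}$_{2,2}$ requires, for $k=1$, the bound $|\<[w(\cdot,y),H_x]f_1,f_2\>_{L^2_x}|\lesssim (1+V_2(y))\,\|f_1\|_{\mathcal H^1_x}\|f_2\|_{\mathcal H^1_x}$, with $\mathcal H^1_x$ norms on \emph{both} factors. Your estimate puts the whole operator on $f_1$ and pairs with $\|f_2\|_{L^2_x}$, which forces you through $\|V_1f_1\|_{L^2_x}$ and $\|\sqrt{V_1}\nabla f_1\|_{L^2_x}$ and therefore only yields $(1+V_2(y))\|f_1\|_{\mathcal H^2_x}\|f_2\|_{\mathcal H^1_x}$ — a strictly weaker inequality that does not verify the $k=1$ part of {\bf (H3)}$_{2,2}$ (and this $\mathcal H^1\times\mathcal H^1$ form is exactly what the paper later exploits, e.g. in \eqref{eq:com2} and in Lemma~\ref{lem:recursive}, where only $\mathcal H^1$ control of the solution is available). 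The missing idea is the symmetric distribution used in the paper: split $|\Delta_x w|^{1/2}$ onto each of $f_1,f_2$ by Cauchy--Schwarz, and for the gradient term pair $\|\nabla_x f_1\|_{L^2_x}$ with $\|\nabla_x w(\cdot,y)f_2\|_{L^2_x}$, so that \eqref{hyp:stronger} produces only $\|\sqrt{V_1}f_j\|_{L^2_x}\lesssim\|f_j\|_{\mathcal H^1_x}$ and $(1+V_2(y))\|f_j\|_{L^2_x}$ on either side; with this splitting the $k=1$ bound closes at the $\mathcal H^1\times\mathcal H^1$ level, and your $k=2$ argument can then proceed unchanged. A secondary inaccuracy: the inference $\|\Delta_x f\|_{L^2_x}\lesssim\|H_xf\|_{L^2_x}$ ``because $-\tfrac12\Delta_x\le H_x$'' is not valid as stated (operator order between non-commuting operators does not compare $\|Af\|$ with $\|Bf\|$); separating $\|\Delta_x f\|$ and $\|V_1f\|$ from $\|H_xf\|$ again requires an integration by parts in which \eqref{eq:temperance} controls the $\nabla V_1$ term, in the same spirit as your treatment of $\|\sqrt{V_1}\nabla f\|$.
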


The proof of this lemma is given in Section~\ref{sec:sufficient}.

\begin{remark}   
 We note that Example~\ref{ex:companion} meets the requirements stated in Lemma~\ref{ex:extra},  provided that  $\|\chi\|_{L^\infty(\R)}<\omega^2/2$. Thus it satisfies the assumptions of Theorem~\ref{theo:main} below for $\alpha=\beta=2$.
\end{remark}

\subsection{Main result and comments}
\label{sec:main-result}
Before stating our main result we informally summarize the previous assumptions on the potentials for the case of polynomial coupling:
\begin{align*}
&{\bf (H1)}: \text{boundedness from below of the potentials $V_1(x)$ and $V_2(y)$;}\\
&{\bf (H2)}: \text{control of $w(x,y)$ in terms of $V_1(x)+V_2(y)$;}\\
&{\bf (H3)}_{\alpha,\beta}: \text{control of commutators involving
 $w(x,y)$, in terms of $H_x$ and $H_y$.} 
\end{align*}
We have the following result on existence and uniqueness as well norm
and energy conservation of the time-dependent Hartree approximation.

\begin{theorem}\label{theo:main}
Let $d_1,d_2\ge 1$, $\alpha,\beta \ge 2$ and $\phi_0^x\in\mathcal H_x^
\alpha$, $\phi_0^y\in \mathcal H_y^\beta$.  Suppose that {\bf
  (H1)}, {\bf (H2)} and~{\bf (H3)}$_{\alpha,\beta}$ are satisfied. 
\begin{itemize}
\item \eqref{eq:system} possesses a unique, global solution in $\Phi\in
C(\R_+;L^2\times 
 L^2)\cap \bigcap_{T>0} X^{\alpha,\beta}_T$. 
\item \emph{Conservations:} the $L^2$-norms of $\phi^x$ and $\phi^y$
  are independent of $t\ge 0$, hence in view of \eqref{eq:normL2},
\begin{equation*}
   \|\phi^x(t)\|_{L^2(\R^{d_1})} =  \|\phi^y(t)\|_{L^2(\R^{d_2})}
   =1,\quad \forall t\ge 0.
 \end{equation*}
 In addition, the following total energy is also independent of $t\ge 0$:
 \begin{align*}
  E(t)&:=\langle H_x\phi^x(t),\phi^x(t)\rangle_{L^2_x}+
        \langle H_y\phi^y(t),\phi^y(t)\rangle_{L^2_y}\\
  &\quad+
  \iint_{\R^{d_1}\times\R^{d_2}}w(x,y)|\phi^x(t,x)|^2|\phi^y(t,y)|^2dxdy.
\end{align*}
\end{itemize}
\end{theorem}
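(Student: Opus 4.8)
The plan is to implement the three-step strategy announced in the introduction: (1) show the iterative scheme \eqref{eq:scheme}--\eqref{eq:av-scheme} is well-defined and uniformly bounded in $X^{\alpha,\beta}_T$ on a short time interval $[0,T]$ whose length depends only on the initial data; (2) show the iterates form a Cauchy sequence in the low-norm space $C([0,T];L^2_x\times L^2_y)$; (3) pass to the limit to obtain a solution, prove uniqueness, and upgrade to a global solution by means of the conserved quantities. First I would treat Step (1). Given $\Phi_n=(\phi^x_n,\phi^y_n)\in X^{\alpha,\beta}_T$ with $L^2$-normalization, the averaged potentials $\langle w_n\rangle_y(t,x)$ and $\langle w_n\rangle_x(t,y)$ are real-valued and, by \textbf{(H2)} together with $\|\phi^y_n(t)\|_{L^2_y}=\|\phi^x_n(t)\|_{L^2_x}=1$, satisfy bounds like $|\langle w_n\rangle_y(t,x)|\le c_0(V_1(x)+\langle V_2\rangle_n+C)$ where $\langle V_2\rangle_n=\int V_2|\phi^y_n|^2$ is controlled by $\|\Phi_n\|_{0,2}^2$. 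Hence the operators $H_x+\langle w_n\rangle_y$ and $H_y+\langle w_n\rangle_x$ are self-adjoint (Kato--Rellich, relative bound $c_0<1$) with time-dependent but bounded-from-below potentials, and the linear Schr\"odinger equations for $\Phi_{n+1}$ are solvable, propagating $\mathcal H_x^\alpha\times\mathcal H_y^\beta$ regularity. The quantitative high-norm bound is the heart of Step (1): applying $H_x^{\alpha/2}$ to the $\phi^x_{n+1}$-equation (using the energy method, i.e. computing $\frac{d}{dt}\|H_x^{\alpha/2}\phi^x_{n+1}\|^2_{L^2_x}$), the only surviving term after cancellation of the self-adjoint parts is a commutator term involving $[H_x^{\alpha/2},\langle w_n\rangle_y]$, equivalently (after expanding $H_x^{\alpha/2}$ against $H_x^{k-1}[w,H_x]$ building blocks as in \textbf{(H3)}$_{\alpha,\beta}$) a bound of the form $\lesssim c_1(1+\langle V_2\rangle_n)\|\phi^x_{n+1}\|_{\mathcal H_x^\alpha}^2$, where $\langle V_2\rangle_n\lesssim\|\Phi_n\|_{\alpha,\beta}^2$. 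Symmetrically for $\phi^y_{n+1}$. This yields a differential inequality $\frac{d}{dt}\|\Phi_{n+1}(t)\|_{\alpha,\beta}^2\lesssim (1+\|\Phi_n(t)\|_{\alpha,\beta}^2)\|\Phi_{n+1}(t)\|_{\alpha,\beta}^2$, from which a standard induction (choosing $R$ a bit larger than $\|\Phi_0\|_{\alpha,\beta}^2$ and $T$ small enough so that the flow cannot leave the ball of radius $R$) gives a uniform bound $\sup_n\|\Phi_n\|_{X^{\alpha,\beta}_T}\le R$.

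Step (2): Set $\delta_n=\Phi_{n+1}-\Phi_n$ and subtract consecutive equations. The difference $\langle w_n\rangle_y-\langle w_{n-1}\rangle_y=\int w(x,y)(|\phi^y_n|^2-|\phi^y_{n-1}|^2)\,dy$ is multiplication by a potential that, by \textbf{(H2)} and Cauchy--Schwarz in the form $||\phi^y_n|^2-|\phi^y_{n-1}|^2|\le(|\phi^y_n|+|\phi^y_{n-1}|)|\phi^y_n-\phi^y_{n-1}|$, is controlled in an $L^1_y$- (or suitable weighted) sense by $\|\delta_{n-1}\|_{L^2_y}$ times the high norms from Step (1). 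Doing the $L^2$ energy estimate on the $\delta_n$-equations, the dangerous terms are $\langle w_n\rangle_y\,\delta_n^x$ (which is purely imaginary in the $L^2$ inner product since $\langle w_n\rangle_y$ is real, hence drops out after taking real part) and $(\langle w_n\rangle_y-\langle w_{n-1}\rangle_y)\phi^x_{n+1}$, which is estimated using the uniform $\mathcal H^\alpha$-bounds on $\phi^x_{n+1}$ and the bound on the potential difference in terms of $\|\delta^y_{n-1}\|_{L^2_y}$. The outcome is $\frac{d}{dt}\|\delta_n(t)\|_{0,0}\lesssim C(R)\big(\|\delta_n(t)\|_{0,0}+\|\delta_{n-1}(t)\|_{0,0}\big)$, and a Gronwall/fixed-point argument on $[0,T]$ (shrinking $T$ if necessary) shows $\sum_n\|\delta_n\|_{L^\infty_T L^2}<\infty$, so $\Phi_n\to\Phi$ in $C([0,T];L^2_x\times L^2_y)$.

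Step (3): The uniform $X^{\alpha,\beta}_T$-bound plus $L^2$-convergence give, by interpolation, convergence in $X^{\alpha',\beta'}_T$ for any $\alpha'<\alpha$, $\beta'<\beta$, and weak-$*$ convergence in $L^\infty_T(\mathcal H_x^\alpha\times\mathcal H_y^\beta)$; this is enough to pass to the limit in every term of \eqref{eq:scheme}—in particular $\langle w_n\rangle_y\phi^x_{n+1}\to\langle w\rangle_y\phi^x$ in a distributional sense, since $|\phi^y_n|^2\to|\phi^y|^2$ in $L^1_y$ locally uniformly in $t$ and $w$ is controlled by the potentials—so $\Phi$ solves \eqref{eq:system} with the stated regularity, and $\Phi\in C(\R_+;L^2\times L^2)$ by the equation. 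Uniqueness on $[0,T]$ is the same $L^2$ energy estimate as in Step (2) applied to the difference of two solutions, yielding Gronwall with zero initial data. For the conservations and globalization, I would invoke the formal computation sketched in Section~\ref{sec:var}: at the regularity level $\alpha,\beta\ge2$ it becomes rigorous that $\frac{d}{dt}\|\phi^x\|^2_{L^2_x}=0$ and $\frac{d}{dt}\|\phi^y\|^2_{L^2_y}=0$ (the nonlinear potentials being real), and that $\frac{d}{dt}E(t)=0$; combining energy conservation with \textbf{(H2)} and $c_0<1$ gives $\|\phi^x(t)\|^2_{\mathcal H_x^2}+\|\phi^y(t)\|^2_{\mathcal H_y^2}\lesssim 1+E(0)$, an a priori bound that prevents blow-up in the $\|\cdot\|_{2,2}$-norm; a bootstrap on the linear equations (now with an a priori bounded potential) propagates the higher $\mathcal H^\alpha\times\mathcal H^\beta$ regularity for all time, so the local solution extends globally.

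\medskip

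The step I expect to be the main obstacle is the high-norm estimate in Step (1)—specifically, extracting from \textbf{(H3)}$_{\alpha,\beta}$ a clean bound on the commutator $[H_x^{\alpha/2},\langle w_n\rangle_y]$ (and its adjoint arrangement) that closes \emph{without losing derivatives}. The subtlety is that $\langle w_n\rangle_y$ has only the crude pointwise control of \textbf{(H2)}, so one must commute $H_x^{\alpha/2}$ through it by repeatedly peeling off single commutators $[w,H_x]$ and reorganizing them into the $H_x^{k-1}[w,H_x]$ and $[w,H_x]H_x^{k-1}$ blocks that \textbf{(H3)}$_{\alpha,\beta}$ bounds—while checking that the $(1+V_2(y))$ weights appearing there integrate against $|\phi^y_n(t,y)|^2$ to give exactly the factor $1+\|\Phi_n\|_{\alpha,\beta}^2$ and nothing worse. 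This is also where the requirement $\alpha,\beta\ge 2$ is genuinely used (one needs at least two derivatives of regularity for the single commutator $[w,H_x]\sim\nabla w\cdot\nabla+\Delta w$ to be lower order relative to $H_x$); a density argument reducing to Schwartz-class $\phi^x_{n+1}$ is needed to legitimately invoke \textbf{(H3)}$_{\alpha,\beta}$.
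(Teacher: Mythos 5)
Your overall route coincides with the paper's: well-posedness and uniform $X^{\alpha,\beta}_T$ bounds for the scheme \eqref{eq:scheme} via Kato--Rellich and commutator estimates assembled from the \textbf{(H3)}$_{\alpha,\beta}$ building blocks (the paper's Lemmas~\ref{lem:unif} and~\ref{lem:recursive}), an $L^2$ contraction for consecutive differences using \textbf{(H2)} (Lemma~\ref{lem:CVL2}), passage to the limit combining strong $L^2$ convergence with weak-$*$ compactness in the high norm, uniqueness by the same $L^2$ estimate, and conservations by multiplier arguments made rigorous by the $X^{2,2}$ regularity. Two points, however, need correction.

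First, and most substantively, your globalization claim that energy conservation together with \textbf{(H2)} and $c_0<1$ yields $\|\phi^x(t)\|^2_{\mathcal H_x^2}+\|\phi^y(t)\|^2_{\mathcal H_y^2}\lesssim 1+E(0)$ is false: $E$ only contains the quadratic forms $\langle H_x\phi^x,\phi^x\rangle=\|H_x^{1/2}\phi^x\|^2_{L^2_x}$ and its $y$-analogue, so coercivity (via $c_0<1$) gives a uniform-in-time bound on $\|\Phi(t)\|_{1,1}$ only; the paper explicitly notes that higher norms need not stay bounded as $t\to\infty$. The correct continuation, which your ``bootstrap'' sentence gestures at but does not make precise, is the paper's: feed the uniform $X^{1,1}$ bound $M$ into the commutator estimate (Lemma~\ref{lem:recursive} with $k=1$), obtaining $\sup_{[0,T]}\|H_x\phi^x\|_{L^2_x}^2\le \|H_x\phi_0^x\|_{L^2_x}^2+CM^2\int_0^T\|\phi^x(t)\|^2_{\mathcal H^2_x}dt$, hence by Gronwall an exponentially growing but locally bounded $\mathcal H^2$ (and, similarly, higher) norm, which rules out finite-time blow-up; note also that the time-dependent potential $\langle w\rangle_y$ is not ``a priori bounded''---only the coefficients coming from the $\mathcal H^1$ norm of the other factor are. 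Second, a minor mis-attribution: the requirement $\alpha,\beta\ge 2$ is not forced by the high-norm commutator step (the paper's Lemma~\ref{lem:unif} holds for $\alpha,\beta\ge 1$); it is genuinely used in your Step (2) and in passing to the limit, because estimating $\langle w_n\rangle_y-\langle w_{n-1}\rangle_y$ through \textbf{(H2)} produces the weight $V_2(y)$, and $\|V_2\phi^y_n\|_{L^2_y}$ is controlled only by the $\mathcal H^2_y$ norm (and symmetrically in $x$).
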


We will see that the assumption $c_0<1$  in {\bf (H2)} arises in two steps of the proof
of Theorem~\ref{theo:main}. First, to make sure that the approximating
scheme \eqref{eq:scheme} introduced below is well-defined, we invoke
Kato--Rellich Theorem, to show essentially that $\<w\>_y$ (or more
precisely, $\<w_n\>_y$) is
$H_x$-bounded with relative bound smaller than one, and that the same
holds when the roles of $x$ and $y$ are swapped. Second, the
assumption $c_0<1$ guarantees that the conserved energy $E$, defined in
Theorem~\ref{theo:main}, is a coercive functional, so the conservation
of $E$ provides uniform in times a priori estimates, which in turn
allow to show that the local in time solutions are actually global in
time solutions. 

The property $\Phi\in
C(\R_+;L^2\times  L^2)\cap \bigcap_{T>0} X^{\alpha,\beta}_T$ means
that $t\mapsto \|\Phi(t)\|_{\alpha,\beta}$ is \emph{locally} bounded
on $\R_+$. The map  $t\mapsto \|\Phi(t)\|_{1,1}$ is bounded on $\R_+$ in view of the
conservation of the coercive energy $E$, but higher order norms may
not be bounded as $t$ goes to infinity (recall that
Theorem~\ref{theo:main} requires $\alpha,\beta\ge 2$).

\section{Analysis of the iterative scheme: existence and uniform
  bounds}  
\label{sec:scheme}

This section is devoted to the analysis of the system~\eqref{eq:scheme}. For $n\in\N$, we denote by
 $\Phi_n=(\phi_n^x,\phi_n^y)$, the solution to the scheme~\eqref{eq:scheme} and we prove local in time uniform estimates. 
 At this stage, we only need that $\phi_0^x\in\mathcal H_x^ \alpha$, $\phi_0^y\in \mathcal
H_y^\beta$ for
 integers $\alpha,\beta \ge 1$.

\begin{lemma}\label{lem:unif}
Let $\alpha,\beta\ge 1$. Assume that {\bf (H1)}, {\bf (H2)} and
{\bf (H3)}$_{\alpha,\beta}$ are satisfied. 
 Assume  $\phi_0^x\in  \mathcal
   H_x^\alpha$, and $\phi_0^y\in \mathcal 
  H_y^\beta$. Then, the sequence $(\Phi_n)_{n\in\N}$ solution to~\eqref{eq:scheme} is well-defined and there 
  exists $T>0$ such that for all $n\in \N$, the solution $\Phi_n\in
  X_T^{\alpha,\beta}$ of the scheme~\eqref{eq:scheme} satisfies 
    \begin{equation}\label{eq:Phinab}
    \|\Phi_n\|_{X_T^{\alpha,\beta}}\le 2\|\Phi_0\|_{\alpha,\beta}. 
  \end{equation}
\end{lemma}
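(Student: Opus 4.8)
The plan is to argue by induction on $n$, proving at once that $\Phi_n$ is well-defined on a time interval $[0,T]$ that does not depend on $n$ and that it obeys \eqref{eq:Phinab}. For $n=0$ there is nothing to do since $\Phi_0$ is constant in $t$, so assume $\Phi_n\in X_T^{\alpha,\beta}$ with $\|\Phi_n\|_{X_T^{\alpha,\beta}}\le 2\|\Phi_0\|_{\alpha,\beta}$. I would first check that the two linear Schr\"odinger equations in \eqref{eq:scheme} are solvable on all of $[0,T]$. The potentials appearing in \eqref{eq:scheme} being real, the $L^2$ norm is conserved at each step, so $\|\phi^y_n(t)\|_{L^2_y}=1$; together with {\bf (H2)} this gives, for a.a.\ $x$ and all $t\in[0,T]$,
\[
  |\langle w_n\rangle_y(t,x)|\le c_0\Bigl(V_1(x)+C+\int_{\R^{d_2}}V_2(y)\,|\phi^y_n(t,y)|^2\,\dd y\Bigr)\le c_0 V_1(x)+M,
\]
where $\int V_2|\phi^y_n|^2\le\langle H_y\phi^y_n,\phi^y_n\rangle_{L^2_y}\le\|\phi^y_n(t)\|_{\mathcal H_y^\beta}^2\le 4\|\Phi_0\|_{\alpha,\beta}^2$ (this is where $\beta\ge1$ is used, through $H_y\le H_y^\beta$) and $M=M(\|\Phi_0\|_{\alpha,\beta})$. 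Hence $\langle w_n\rangle_y(t,\cdot)$ is, uniformly in $t$ and in $n$, form-bounded relatively to $H_x$ with bound $c_0<1$, so by the KLMN theorem $H_x+\langle w_n\rangle_y(t)$ is self-adjoint and bounded below uniformly in $t$, with form domain $\mathcal H_x^1$. The $L^2_y$-continuity of $t\mapsto\phi^y_n(t)$, inherited from step $n$, yields the strong continuity of this family, so the classical theory of linear Schr\"odinger-type evolution equations produces a unique $\phi^x_{n+1}\in C([0,T];L^2_x)$ solving \eqref{eq:scheme} with $\phi^x_{n+1}(0)=\phi^x_0$; linearity precludes any finite-time breakdown, so the solution is defined on the whole of $[0,T]$. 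The same applies to $\phi^y_{n+1}$.

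The core of the argument is the uniform-in-$n$ a priori bound in $\|\cdot\|_{\alpha,\beta}$. Differentiating $\|\Phi_{n+1}(t)\|_{\alpha,\beta}^2$, the $L^2$ parts stay constant, and for the $x$-component, using the equation together with the self-adjointness of $H_x$ and the fact that $\langle w_n\rangle_y$ is a real multiplication operator, the free contribution and the diagonal part of the potential contribution drop out, leaving the commutator term
\[
  \frac{\dd}{\dd t}\big\langle H_x^\alpha\phi^x_{n+1}(t),\phi^x_{n+1}(t)\big\rangle_{L^2_x}
  = i\,\big\langle \phi^x_{n+1}(t),[H_x^\alpha,\langle w_n\rangle_y]\,\phi^x_{n+1}(t)\big\rangle_{L^2_x},
\]
a real quantity since $[H_x^\alpha,\langle w_n\rangle_y]$ is anti-self-adjoint (and symmetrically for the $y$-component, with $H_y^\beta$ and $\langle w_n\rangle_x$). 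I would then expand $[H_x^\alpha,\langle w_n\rangle_y]=\sum_{j=0}^{\alpha-1}H_x^j[H_x,\langle w_n\rangle_y]H_x^{\alpha-1-j}$ together with $[H_x,\langle w_n\rangle_y](t,x)=\int_{\R^{d_2}}[H_x,w(\cdot,y)]\,|\phi^y_n(t,y)|^2\,\dd y$, and rearrange each term — using the self-adjointness of $H_x$ to redistribute powers across the inner product so that, thanks to $H_x\ge1$, each of the two entries lies in $\mathcal H_x^k$ with norm $\lesssim\|\phi^x_{n+1}\|_{\mathcal H_x^\alpha}$ — into one of the two shapes $\langle H_x^{k-1}[w(\cdot,y),H_x]f_1,f_2\rangle_{L^2_x}$ and $\langle[w(\cdot,y),H_x]H_x^{k-1}f_1,f_2\rangle_{L^2_x}$ with $1\le k\le\alpha$ and $f_1,f_2$ of the form $H_x^\ell\phi^x_{n+1}$. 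Applying {\bf (H3)}$_{\alpha,\beta}$ for a.a.\ $y$, integrating against $|\phi^y_n(t,y)|^2\,\dd y$, and using $\int_{\R^{d_2}}(1+V_2(y))|\phi^y_n(t,y)|^2\,\dd y\le 1+\|\phi^y_n(t)\|_{\mathcal H_y^\beta}^2\le 1+\|\Phi_n(t)\|_{\alpha,\beta}^2$ (and the mirror bound in $y$), this leads to
\[
  \frac{\dd}{\dd t}\,\|\Phi_{n+1}(t)\|_{\alpha,\beta}^2\le C\bigl(1+\|\Phi_n(t)\|_{\alpha,\beta}^2\bigr)\,\|\Phi_{n+1}(t)\|_{\alpha,\beta}^2
\]
for a universal constant $C$. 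This differentiation is formal as written; to make it rigorous one runs it on a regularized scheme (mollified coupling, or a spectral truncation of $H_x$ and $H_y$), with all bounds uniform, and lets the regularization tend to zero.

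Closing the induction is then routine. Feeding the hypothesis $\|\Phi_n(t)\|_{\alpha,\beta}\le 2\|\Phi_0\|_{\alpha,\beta}$ on $[0,T]$ into the previous inequality and applying Gr\"onwall's lemma gives, for $t\in[0,T]$,
\[
  \|\Phi_{n+1}(t)\|_{\alpha,\beta}^2\le\|\Phi_0\|_{\alpha,\beta}^2\,\exp\!\bigl(C(1+4\|\Phi_0\|_{\alpha,\beta}^2)\,t\bigr),
\]
since $\Phi_{n+1}(0)=\Phi_0$. Choosing $T=T(\|\Phi_0\|_{\alpha,\beta})>0$, independently of $n$, so small that $\exp\!\bigl(C(1+4\|\Phi_0\|_{\alpha,\beta}^2)T\bigr)\le 4$, one gets $\|\Phi_{n+1}\|_{X_T^{\alpha,\beta}}\le 2\|\Phi_0\|_{\alpha,\beta}$, which completes the induction and, along the way, confirms that each $\phi^x_{n+1}(t)$, resp.\ $\phi^y_{n+1}(t)$, remains in $\mathcal H_x^\alpha$, resp.\ $\mathcal H_y^\beta$, for $t\in[0,T]$.

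I expect the commutator step to be the main obstacle: one must verify that the expansion of $\frac{\dd}{\dd t}\|\Phi_{n+1}\|_{\alpha,\beta}^2$ really does decompose into pieces matching exactly the two admissible forms of {\bf (H3)}$_{\alpha,\beta}$, with an exponent $k\le\alpha$ (resp.\ $k\le\beta$) and with the auxiliary functions $H_x^\ell\phi^x_{n+1}$ (resp.\ $H_y^\ell\phi^y_{n+1}$) bounded in $\mathcal H_x^k$ (resp.\ $\mathcal H_y^k$) by $\|\phi^x_{n+1}\|_{\mathcal H_x^\alpha}$ (resp.\ $\|\phi^y_{n+1}\|_{\mathcal H_y^\beta}$); this bookkeeping, together with the regularization needed to legitimize differentiating $\langle H_x^\alpha\phi^x_{n+1},\phi^x_{n+1}\rangle_{L^2_x}$, is where the real work lies — the rest is Gr\"onwall and linear Cauchy theory.
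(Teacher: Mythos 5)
Your proposal is correct and follows essentially the same route as the paper: induction on $n$, well-posedness of each linear step via the relative bound $c_0<1$ from {\bf (H2)} (the paper invokes Kato--Rellich rather than KLMN), and an energy estimate on $\langle H_x^\alpha\phi^x_{n+1},\phi^x_{n+1}\rangle_{L^2_x}$ (and its $y$-counterpart) in which the only surviving term is the commutator $[H_x^\alpha,\langle w_n\rangle_y]$, controlled through {\bf (H3)}$_{\alpha,\beta}$, closed by Gr\"onwall/absorption on a time $T$ depending only on $\|\Phi_0\|_{\alpha,\beta}$. The bookkeeping you single out as the main obstacle is precisely the content of the paper's Lemma~\ref{lem:recursive}, which is proved there by induction on the power of $H_x$ via the splitting $[H_x^{k+1},\langle w_n\rangle_y]=H_x[H_x^{k-1},\langle w_n\rangle_y]H_x+H_x^k[H_x,\langle w_n\rangle_y]+[H_x,\langle w_n\rangle_y]H_x^k$, an equivalent (and slightly tidier) alternative to your telescoping expansion with redistribution of powers.
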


The proof of this lemma relies on the fact that the control of  $\Phi_{n+1}$
involves terms which are linear in $\Phi_{n+1}$ and quadratic in
$\Phi_{n}$, and require the $X_T^{1,1}$-norm of $\Phi_{n}$. 
For this reason, the Lemma holds as soon as $\alpha,\beta \geq 1$. However in
Theorem~\ref{theo:main} , we require at least an $X^{2,2}_T$
regularity.  The reason will appear in
Section~\ref{sec:convergence}, as we do need uniform (in $n$)
estimates in $X^{2,2}_T$ to show that the sequence $(\Phi_n)_n$
converges in $X^{0,0}_T=L^\infty_T L^2$. 

In Section~\ref{sec:prelim}, we address the construction of the family $(\Phi_n)_{n\in\N}$, which relies on a commutation lemma that we prove in Section~\ref{sec:proof-lemma}. Section~\ref{sec:unif} is devoted to the proof of the uniform bound stated in Lemma~\ref{lem:unif}.

\subsection{Well-posedness of the scheme}\label{sec:prelim}

Before entering into the proof of Lemma~\ref{lem:unif}, let us discuss 
why the scheme is indeed well-defined: as $\Phi_{n+1}$ solves a
decoupled system of linear Schr\"odinger equations, it suffices to
study the properties of the time-dependent potentials $\<w_n\>_y$ and
$\<w_n\>_x$.  
 We fix $T>0$ arbitrary and take $\phi_0^x\in  \mathcal
   H_x^\alpha$, and $\phi_0^y\in \mathcal 
  H_y^\beta$, with $\alpha,\beta\ge 1$. 
For $n=0$, $\Phi_0$ is obviously well-defined with $\Phi_0\in
X_T^{\alpha,\beta}$, and
\begin{equation}
  \label{eq:recL2}
\|\phi_n^x(t)\|_{L^2_x}= \|\phi_n^y(t)\|_{L^2_y}=1 ,\quad \forall t\in \R,
\end{equation}
holds for $n=0$. We argue by induction. If $\Phi_n\in X_T^{1,1}$ satisfies \eqref{eq:recL2},
then in view of {\bf (H2)}, $\langle w_n\rangle_y(t,x)$ and
$\langle w_n\rangle_x(t,y)$ are well-defined. In addition, for $t\in
[0,T]$, {\bf (H2)} yields
\begin{equation}\label{eq:est-wn}
\begin{aligned}
 & |\langle w_n\rangle_y(t,x)|\le
 c_0 V_1(x)\|\phi^y_n(t)\|_{L^2}^2
+C \|\phi_n^y(t)\|^2_{ \mathcal H_y^{1}}
,\quad \text{ a.e. }x,\\
& |\langle w_n\rangle_x(t,y)|\le
 c_0 V_2(y) \|\phi^x_n(t)\|^2_{L^2} +C\|\phi_n^x(t)\|^2_{\mathcal H_x^{1}},
\quad \text{a.e. } y,
\end{aligned}
\end{equation}
for some constant $C$ whose value is irrelevant here, unlike the fact
that we assume $c_0<1$.
Indeed, together with \eqref{eq:recL2},  this implies that $\langle w_n\rangle_y$ is $H_x$-bounded with
relative bound at most $c_0$. By Kato--Rellich Theorem (see
e.g. \cite[Theorem~X.12]{ReedSimon2}),  $\Phi_{n+1} \in X_T^{0,0}$
is well-defined
(see e.g. \cite[Section~VIII.4]{ReedSimon1}), and \eqref{eq:recL2}
holds at level $n+1$.
 Next, we prove that
 $\Phi_{n+1}\in X_T^{1,1}$. 
 Applying the  operator $H_x$ to the
first equation in \eqref{eq:scheme}, we find
\begin{equation}\label{eq:tata3}
  (i\d_t -H_x)(H_x \phi_{n+1}^x) =\langle w_n\rangle_y (t)(H_x \phi_{n+1}^x )+ 
  [ H_x, \langle w_n\rangle_y(t)] \phi^x_{n+1}.
\end{equation}
Since $H_x$ is self-adjoint,  we deduce 
 \begin{align*}
  \| H^{1/2}_x \phi^x_{n+1}(t) \|_{L^2_x}^2& = {\rm Re}  \< H_x \phi^x_{n+1}(t),\phi^x_{n+1}(t)\>_{L^2_x}\\
 &=   \| H^{1/2}_x \phi^x_0 \|_{L^2_x}^2+{\rm Re}  \left(\int_0^t \frac d{ds} \< H_x \phi^x_{n+1}(s),\phi^x_{n+1}(s)\>_{L^2_x}ds\right)\\
 & = \| H^{1/2}_x \phi^x_0 \|_{L^2_x}^2- {\rm Re} \left( \int_0^t \<i  [ H_x, \langle w_n\rangle_y] \phi^x_{n+1} (s) , \phi^x_{n+1} (s) \>_{L^2_x} ds\right).
 \end{align*}
 Minkowski inequality yields, in view of {\bf (H3)}$_{1,1}$,
\begin{equation}\label{eq:com2}
    \left|\< [H_x,\langle w_n\rangle_y(t)] f_1, f_2\>_{L^2_x}\right|  \lesssim \|
    \phi_n^y\|_{L^\infty_T\mathcal H^1_y}^2   \| f_1\|_{\mathcal H_x^1}  \| f_2\|_{\mathcal H_x^1}.
\end{equation}
We infer  the existence of a universal constant $C>0$ such that
 \begin{align*}
  \| H^{1/2}_x \phi^x_{n+1}(t) \|_{L^2_x}^2&
 \leq \| H^{1/2}_x \phi^x_0 \|_{L^2_x}^2+ C\|\Phi_{n}\|_{X^{1,1}_T}^2
   \int_0^t \| \phi^x_{n+1}(s)\|^2_{\mathcal H_x^1} ds.
\end{align*}
We deduce 
\begin{equation}\label{eq:tata1}
\sup_{t\in[0,T]}  \| \phi^x_{n+1}(t)\|^2_{\mathcal H_x^1} \lesssim  \| \phi^x_{0}\|^2_{\mathcal H_x^1}  + C
\|\Phi_{n}\|_{X^{1,1}_T}^2
   \int_0^t \| \phi^x_{n+1}(s)\|^2_{\mathcal H_x^1} ds,
\end{equation}
 We have a similar estimate for $\| H^{1/2}_y \phi^y_{n+1}(t)
 \|_{L^2_x}^2$:
 \begin{equation}\label{eq:tata2}
\sup_{t\in[0,T]}  \| \phi^y_{n+1}(t)\|^2_{\mathcal H_y^1} \lesssim  \| \phi^y_{0}\|^2_{\mathcal H_y^1}  + C
\|\Phi_{n}\|_{X^{1,1}_T}^2
   \int_0^t \| \phi^y_{n+1}(s)\|^2_{\mathcal H_y^1} ds,
\end{equation}
and so Gronwall lemma and the inductive assumption yields $\Phi_{n+1}\in
 X_T^{1,1}$ and completes the construction of the sequence $(\Phi_n)_{n\in\N}$. 

\subsection{Uniform bounds}
\label{sec:unif}

We conclude the proof of Lemma~\ref{lem:unif} in analyzing the regularity of the solutions. 
%

\begin{proof}[Proof of Lemma~\ref{lem:unif}]
  In view of the definition of the scheme and of the conservations
  \begin{equation*}
    \frac{d}{dt}\|\phi_n^x\|_{L^2}^2 = \frac{d}{dt}\|\phi_n^y\|_{L^2}^2=0,
  \end{equation*}
we need now consider $H_x^\alpha \phi^x_n$ and $H_y^\beta\phi^y_n$ for $\alpha,\beta\geq 1$.
 Let
\begin{equation*}
  R = 2\|\Phi_0\|_{\alpha,\beta},
\end{equation*}
and introduce
\begin{equation*}
  B_{R,T}=\{\Phi\in X_T^{\alpha,\beta},\ \|\Phi\|_{X_T^{\alpha,\beta}}\le
  R\}. 
\end{equation*}
 We distinguish two cases for the ease of presentation. 
 \medskip

\noindent{\bf First case: $\alpha=\beta=1$.}
In that case, if $\Phi_n\in B_{R,T}$, then estimates \eqref{eq:tata1} and~\eqref{eq:tata2} imply 
\begin{equation*}
   \|  \Phi_{n+1}(t)\| _{X_T^{1,1}}^2  \le \| 
  \Phi_{0}\| _{1,1}^2+ C TR^2\|\Phi_{n+1}(t)\|
  _{X_T^{1,1}}^2. 
\end{equation*}
We infer that choosing $T>0$ sufficiently small in terms of $R$, but
independently of $n$, $\Phi_n\in
B_{R,T}$ implies $\Phi_{n+1}\in B_{R,T}$.
\medskip

\noindent{\bf Higher regularity}:
 The control of higher
 order regularity is obtained  by a similar recursive argument which uses an
 iterated commutator estimate. Let $\alpha,\beta\geq 1$. 
 We have 
  \begin{equation}
\label{eq:scheme_alpha}
\left\{
\begin{aligned}
i\partial_t H_x^k \phi^x_{n+1}= (H_x  + \langle w_n\rangle_y )H_x^k \phi^x_{n+1} 
+ [ H^k_x, \langle w_n\rangle_y] \phi^x_{n+1} 
,\quad H_x^k\phi^x_{n+1\mid t=0}= H_x^k\phi^x_0,\\
i\partial_t  H_y^\ell  \phi^y_{n+1}=( H_y  + \langle w_n\rangle_x)H_y^\beta
\phi^y_{n+1} 
+[H_y^\ell,\langle w_n\rangle_x] \phi^y_{n+1}
,\quad H_y^\ell\phi^y_{n+1\mid t=0}= H_y^\ell\phi^y_0.
\end{aligned}
\right.
\end{equation}
 The next lemma allows to control the commutators.  
 
 \begin{lemma}\label{lem:recursive}
 Let $ \Phi_n\in X^{1,1}_T$ for some $T>0$, and $k,\ell \ge 1$ be
 integers. Suppose that {\bf (H3)}$_{k,\ell}$ is satisfied.
 For all $t\in [0,T]$, 
 \begin{align*}
 \left|\< [H_x^k, \langle w_n\rangle_y(t)] f_1,
   f_2\>_{L^2_x}\right|& \lesssim \| \phi_n^y\|^2_{L^\infty_T
   \mathcal H^{1} _y} 
\| f_1\|_{\mathcal H^k_x} \|f_2\| _{\mathcal H^k_x} ,
\qquad \forall f_1,f_2\in \mathcal H_x^{k}\\
 \left| \< [H_y^\ell, \langle w_n\rangle_x(t)]
   g_1,g_2\>_{L^2_y}\right|
& \lesssim \| \phi_n^x\|^2_{L^\infty_T
 \mathcal H^1_x} 
\| g_1\|_{\mathcal H^\ell_y} \| g_2\| _{\mathcal H^\ell_y},
\qquad \forall g_1,g_2\in \mathcal H_y^{\ell} . 
 \end{align*}
 \end{lemma}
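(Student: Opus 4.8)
The plan is to prove the first inequality; the second is obtained verbatim after exchanging the roles of $x$ and $y$ (and of $k$ and $\ell$). Since all the bilinear forms in play are bounded on the relevant spaces, by density of Schwartz functions in $\mathcal H_x^k$ it suffices to argue for $f_1,f_2\in\mathcal S(\R^{d_1})$. Fix $t\in[0,T]$ and write $A:=\langle w_n\rangle_y(t,\cdot)$, a real-valued multiplication operator in the variable $x$. I would start from the Leibniz expansion
\[
[H_x^k,A]=\sum_{j=0}^{k-1}H_x^{j}\,[H_x,A]\,H_x^{k-1-j},
\]
so that, pairing against $f_1,f_2$ and moving the self-adjoint factor $H_x^{j}$ to the right, one is reduced to bounding $\langle [H_x,A]H_x^{k-1-j}f_1,H_x^{j}f_2\rangle_{L^2_x}$ for $0\le j\le k-1$. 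Because $A$ is multiplication by $x\mapsto\int_{\R^{d_2}}w(x,y)|\phi_n^y(t,y)|^2\,dy$ and the commutator $[H_x,\cdot\,]$ only involves $\Delta_x$ and $V_1(x)$, one may pull the $x$-derivatives under the (absolutely convergent, thanks to $\mathbf{(H3)}_{k,\ell}$) $y$-integral and obtain $[H_x,A]=-\int_{\R^{d_2}}[w(\cdot,y),H_x]\,|\phi_n^y(t,y)|^2\,dy$; this is exactly the step underlying the case $k=\ell=1$ in \eqref{eq:com2}, now carried one level further.

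The core of the argument is then the pointwise-in-$y$ estimate
\[
\big|\langle [w(\cdot,y),H_x]H_x^{k-1-j}f_1,H_x^{j}f_2\rangle_{L^2_x}\big|\lesssim \big(1+V_2(y)\big)\,\|f_1\|_{\mathcal H_x^k}\,\|f_2\|_{\mathcal H_x^k},\qquad 0\le j\le k-1.
\]
The operator $[w(\cdot,y),H_x]$ is formally skew-adjoint (as $w$ is real-valued), and the right-hand side above is symmetric in $f_1,f_2$, so it is enough to treat $j\le (k-1)/2$. For such $j$ I would write $H_x^{k-1-j}=H_x^{(k-2j)-1}H_x^{j}$ and set $g_i:=H_x^{j}f_i$; the bracket becomes $\langle [w(\cdot,y),H_x]H_x^{(k-2j)-1}g_1,g_2\rangle_{L^2_x}$, which is precisely the second form appearing in $\mathbf{(H3)}_{k,\ell}$ with index $m=k-2j\in\{1,\dots,k\}$, hence bounded by $c_1(1+V_2(y))\|g_1\|_{\mathcal H_x^{k-2j}}\|g_2\|_{\mathcal H_x^{k-2j}}$. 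The pointwise estimate is then finished by the elementary identity $\|H_x^{j}f\|_{\mathcal H_x^{k-2j}}^2=\|H_x^{j}f\|_{L^2_x}^2+\|H_x^{k/2}f\|_{L^2_x}^2\le 2\|f\|_{\mathcal H_x^k}^2$, where $\mathbf{(H1)}$ enters through $H_x\ge 1$, which gives $\|H_x^{j}f\|_{L^2_x}\le\|H_x^{k/2}f\|_{L^2_x}$ whenever $2j\le k$.

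To conclude, I would integrate the pointwise bound against $|\phi_n^y(t,y)|^2\,dy$ and sum the $k$ contributions, obtaining
\[
\big|\langle[H_x^k,A]f_1,f_2\rangle_{L^2_x}\big|\lesssim\Big(\int_{\R^{d_2}}\big(1+V_2(y)\big)|\phi_n^y(t,y)|^2\,dy\Big)\|f_1\|_{\mathcal H_x^k}\|f_2\|_{\mathcal H_x^k}.
\]
Finally $\mathbf{(H1)}$ yields $V_2\le H_y$, so $\int_{\R^{d_2}}(1+V_2(y))|\phi_n^y(t,y)|^2\,dy\le\|\phi_n^y(t)\|_{L^2_y}^2+\langle H_y\phi_n^y(t),\phi_n^y(t)\rangle_{L^2_y}=\|\phi_n^y(t)\|_{\mathcal H_y^1}^2\le\|\phi_n^y\|_{L^\infty_T\mathcal H_y^1}^2$, and one extends to all $f_1,f_2\in\mathcal H_x^k$ by density. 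The hard part is the pointwise estimate: one must recast every mixed term $H_x^{j}[w(\cdot,y),H_x]H_x^{k-1-j}$ as one of the two one-sided forms allowed by $\mathbf{(H3)}_{k,\ell}$, by transferring the $\min(j,k-1-j)$ innermost powers of $H_x$ symmetrically onto the test functions and using skew-adjointness to reduce to the case $j\le(k-1)/2$; this is exactly why $\mathbf{(H3)}_{k,\ell}$ is formulated with both $H_x^{m-1}[w,H_x]$ and $[w,H_x]H_x^{m-1}$, for all $m$ up to $k$ (and likewise for $H_y$, up to $\ell$).
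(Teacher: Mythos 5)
Your proof is correct, and it reaches the stated bounds by a genuinely different route from the paper. The paper argues by induction on $k$, via the identity $[H_x^{k+1},\<w_n\>_y]=H_x[H_x^{k-1},\<w_n\>_y]H_x+H_x^{k}[H_x,\<w_n\>_y]+[H_x,\<w_n\>_y]H_x^{k}$: the sandwiched middle term is handled by the inductive hypothesis applied to $H_xf_1,H_xf_2$ (together with $0\le H_x^{k-1}\le H_x^{k+1}$), and the two extreme terms are exactly the one-sided forms of {\bf (H3)} at the top index, integrated in $y$ by Minkowski. You instead expand $[H_x^{k},\<w_n\>_y]=\sum_{j=0}^{k-1}H_x^{j}[H_x,\<w_n\>_y]H_x^{k-1-j}$ in one shot, which produces mixed terms with powers of $H_x$ on both sides of the single commutator -- terms the paper's induction never generates -- and you dispose of them by redistributing powers onto the test functions: self-adjointness of $H_x$, skew-adjointness of $[w(\cdot,y),H_x]$ to reduce to $j\le (k-1)/2$, and $H_x\ge 1$ from {\bf (H1)} to get $\|H_x^{j}f\|_{\mathcal H_x^{k-2j}}\lesssim \|f\|_{\mathcal H_x^{k}}$, so that each term becomes the one-sided form $[w,H_x]H_x^{m-1}$ of {\bf (H3)} with $m=k-2j$. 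Both arguments rest on the same pointwise-in-$y$ bounds and on $\int_{\R^{d_2}}(1+V_2)|\phi_n^y|^2dy\le\|\phi_n^y\|_{\mathcal H^1_y}^2$; the paper's induction is shorter because it avoids mixed terms altogether, while your expansion makes explicit why {\bf (H3)} is formulated with both one-sided forms and with all indices up to $k$ (you genuinely invoke the lower indices $m=k-2j$, whereas the paper only uses the top index plus the inductive hypothesis). Two minor caveats, both at the same level of informality as the paper itself: the functions $g_i=H_x^{j}f_i$ to which you apply {\bf (H3)} need not be Schwartz, so you implicitly use the density-extended form of that hypothesis; and the skew-adjointness step could be avoided altogether by treating the terms with $j\ge (k-1)/2$ through the other one-sided form $H_x^{m-1}[w,H_x]$ with $m=2j-k+2$, applied to $H_x^{k-1-j}f_1$ and $H_x^{k-1-j}f_2$.
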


 Taking the lemma for granted,  \eqref{eq:scheme_alpha} implies, since $H_x$ is self-adjoint,  
 \begin{align*}
 \|\phi_{n+1}^x(t)\|_{\mathcal H^\alpha _x}^2 &
  = {\rm Re}  \< H_x ^\alpha \phi^x_{n+1}(t),\phi^x_{n+1}(t)\>_{L^2_x}\\
 &=   \| H^{\alpha/2}_x \phi^x_0 \|_{L^2_x}^2+{\rm Re}  \left(\int_0^t \frac d{ds} \<H_x ^\alpha \phi^x_{n+1}(s),\phi^x_{n+1}(s)\>_{L^2_x}ds\right)\\
 & = \| H^{\alpha/2}_x \phi^x_0 \|_{L^2_x}^2- {\rm Re} \left( \int_0^t \<i  [ H_x^\alpha , \langle w_n\rangle_y] \phi^x_{n+1} (s) , \phi^x_{n+1} (s) \>_{L^2_x} ds\right)\\
 & \le \| H_x ^{\alpha/2} \phi_{0}^x\| _{L^2_x}^2+ C T \|\phi^y_n\|^2_{L^\infty_T \mathcal
   H^1_y} \sup_{t\in[0,T]} \|\phi_{n+1}^x(t)\|_{\mathcal H^\alpha _x}^2.
 \end{align*}

 We have a similar estimate for $\| H^{\beta/2}_y \phi^y_{n+1}(t)
 \|_{L^2_x}^2$, and so 
   if $\Phi_n\in B_{R,T}$, then equations~\eqref{eq:tata1}
   and~\eqref{eq:tata2} imply
   \begin{equation*}
   \|  \Phi_{n+1}(t)\| _{X_T^{\alpha,\beta}}^2  \le \|
  \Phi_{0}\| _{\alpha,\beta}^2+ C TR^2\|\Phi_{n+1}(t)\|
  _{X_T^{\alpha,\beta}}^2. 
\end{equation*}
We infer that choosing $T>0$ sufficiently small in terms of $R$, but
independently of $n$, $\Phi_n\in
B_{R,T}$ implies $\Phi_{n+1}\in B_{R,T}$.
It thus remains to prove the lemma, which is the subject of the next subsection. 
\end{proof}

\begin{remark}
 Lemma~\ref{lem:unif} holds as soon as $\alpha,\beta\ge 1$, but this
 is not enough in order to conclude that the sequence $(\Phi_n)_{n\in
   \N}$ converges to some solution of \eqref{eq:system}. Indeed, the
 mere boundedness in $X_T^{1,1}$ only implies the convergence of a subsequence in the
 weak-* topology: this is not enough to pass to the limit in
 \eqref{eq:scheme}, both because the subsequence need not retain
 consecutive indices, and because the topology considered is too large
 to pass to the limit in nonlinear terms. These issues are
 overcome by requiring $\alpha,\beta\ge 2$ in
 Sections~\ref{sec:convergence} and~\ref{sec:solution}. 
\end{remark}

\subsection{Proof of Lemma~\ref{lem:recursive}}
\label{sec:proof-lemma}

Of course,~\eqref{eq:com2} implies the result
 when $k=1$. Take $k\geq 1$ and 
assume that the result holds for all $m \leq k$.
We write 
\begin{align*}
[ H_x^{k+1}, \langle w_n\rangle_y(t)] & = H_x^k[H_x,  \langle w_n\rangle_y(t)] + [H_x^k, \langle w_n\rangle_y(t)] H_x\\
&= H_x[H^{k-1}, \langle w_n\rangle_y(t)]H_x + H_x^k [H_x, \langle w_n\rangle_y(t)]  + [H_x , \langle w_n\rangle_y(t)] H_x^k. 
\end{align*}
We deduce from~\eqref{eq:com2} and the recursive assumption that for $f_1,f_2\in\mathcal \Sch(\R^{d_1})$, we have 
\begin{align*}
\left|\<[ H_x^{k+1}, \< w_n\>_y(t)]f_1,f_2\>_{L^2_x} \right|&\le
\left|\<[ H_x^{k-1}, \< w_n\>_y(t)] H_xf_1,H_xf_2\>_{L^2_x} \right|\\
&\quad +\left|\< H_x^k [ H_x, \< w_n\>_y(t)]f_1,  f_2\>_{L^2_x}\right|+\left|\<[ H_x, \<
  w_n\>_y(t)]  H_x^k  f,g\>_{L^2_x}\right|.
  \end{align*}
  By  the recursive assumption 
  \begin{align*}
  \left|\<[ H_x^{k-1}, \< w_n\>_y(t)] H_xf_1,H_xf_2\>_{L^2_x} \right| &  \lesssim 
\| \phi^y_n\|_{L^\infty_T\mathcal H_y^1}^2 \| H_x f_1\|_{\mathcal
   H^{k-1}_x } \| H_x f_2\|_{\mathcal H^{k-1}_x}  
\\
&\lesssim
 \| \phi^y_n\|_{L^\infty_T\mathcal H_y^1}^2 \| f_1\|_{\mathcal H^{k+1}_x } \| f_2\|_{\mathcal H^{k+1}_x} .
\end{align*}
Finally,  in view of {\bf (H3)}$_{k,\ell}$ and Minkowski
inequality, we have 
\begin{align*}
 \left|   \< H_x^k [H_x,\< w_n\>_y(t)] f_1, f_2\>_{L^2_x} \right|   & \lesssim \|
    \phi_n^y\|_{L^\infty_T\mathcal H^1_y}^2   \| f_1\|_{\mathcal
    H_x^{k+1}}  \| f_2\|_{\mathcal H_x^{k+1}},\\ 
 \left|   \<  [H_x,\< w_n\>_y(t)]  H_x^k f_1, f_2\>_{L^2_x}  \right| 
 & \lesssim \|
    \phi_n^y\|_{L^\infty_T\mathcal H^1_y}^2   \| f_1\|_{\mathcal H_x^{k+1}}  \| f_2\|_{\mathcal H_x^{k+1}},
    \end{align*}
which concludes the proof, after arguing similarly with $H_y$. 


\section{Convergence in small norms}
\label{sec:convergence}

The second step of the proof of Theorem~\ref{theo:main} consists in passing to the limit $n\rightarrow +\infty$ and prove the existence of a limit to the sequence $(\Phi_n)_{n\in\N}$ of solutions to~\eqref{eq:scheme}. The main result in this section is:
\begin{lemma}\label{lem:CVL2}
 Assume that there exist $T>0$ and $R>0$ such that 
 \begin{equation*}
   \sup_{n\in \N}\|\Phi_n\|_{X_T^{2,2}}\le R.
 \end{equation*}
Then there exist $T_1\in ]0,T]$ and  $\Phi \in X_{T_1}^{2,2}$ such that 
\begin{equation}\label{eq:CVL2}
\sup_{0\le t\le T_1}\|\Phi_n(t)-\Phi(t)\|_{L^2_x\times
  L^2_y}=\|\Phi_n-\Phi\|_{X^{0,0}_{T_1}}\Tend n \infty 0. 
\end{equation}
If in addition $(\Phi_n)_n$ is bounded in $X_T^{\alpha,\beta}$ for
some integers $\alpha,\beta\ge 2$, then $\Phi \in X_{T_1}^{\alpha,\beta}$.
\end{lemma}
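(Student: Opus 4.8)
The plan is to run the standard ``difference'' estimate for the scheme: using the \emph{uniform} bound in the strong norm $X^{2,2}_T$, show that $(\Phi_n)_n$ is a Cauchy sequence in the weak norm $X^{0,0}_{T_1}=L^\infty_{T_1}(L^2_x\times L^2_y)$ on a (possibly shorter) interval $[0,T_1]$, then pass to the limit and recover its regularity. Set $v^x_{n+1}:=\phi^x_{n+1}-\phi^x_n$ and $v^y_{n+1}:=\phi^y_{n+1}-\phi^y_n$. Subtracting two consecutive copies of~\eqref{eq:scheme} and rearranging the coupling term,
\begin{equation*}
i\partial_t v^x_{n+1}=H_x v^x_{n+1}+\langle w_n\rangle_y\, v^x_{n+1}+\bigl(\langle w_n\rangle_y-\langle w_{n-1}\rangle_y\bigr)\phi^x_n,\qquad v^x_{n+1}(0)=0,
\end{equation*}
and symmetrically for $v^y_{n+1}$. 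Pairing with $v^x_{n+1}$ in $L^2_x$ and taking imaginary parts, the term $H_xv^x_{n+1}$ drops out by self-adjointness of $H_x$, and the term $\langle w_n\rangle_y v^x_{n+1}$ drops out because $\langle w_n\rangle_y$ is real-valued, leaving
\begin{equation*}
\frac{d}{dt}\|v^x_{n+1}(t)\|_{L^2_x}^2=2\,\Im\,\langle \bigl(\langle w_n\rangle_y-\langle w_{n-1}\rangle_y\bigr)(t)\,\phi^x_n(t)\,,\,v^x_{n+1}(t)\rangle_{L^2_x},
\end{equation*}
and an analogous identity in $y$.

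The heart of the argument is the estimate
\begin{equation*}
\bigl\|\bigl(\langle w_n\rangle_y-\langle w_{n-1}\rangle_y\bigr)(t)\,\phi^x_n(t)\bigr\|_{L^2_x}\le C(R)\,\|v^y_n(t)\|_{L^2_y},
\end{equation*}
with $C(R)$ depending only on $R$ (and the fixed data), not on $n$ or $t$. To prove it, write $\langle w_n\rangle_y-\langle w_{n-1}\rangle_y=\int_{\R^{d_2}}w(x,y)\bigl(|\phi^y_n|^2-|\phi^y_{n-1}|^2\bigr)(t,y)\,dy$, bound $\bigl||\phi^y_n|^2-|\phi^y_{n-1}|^2\bigr|\le|v^y_n|\,(|\phi^y_n|+|\phi^y_{n-1}|)$, use $|w(x,y)|\le c_0(V_1(x)+V_2(y)+C)$ from~{\bf (H2)}, and apply Cauchy--Schwarz in $y$; the unit $L^2$-norms of $\phi^y_n,\phi^y_{n-1}$ produce the factor $\|v^y_n(t)\|_{L^2_y}$, while the remaining factors are $\|(V_1+C)\phi^x_n(t)\|_{L^2_x}$ together with $L^2_y$- and $\mathcal H^1_y$-norms of $\phi^y_n(t),\phi^y_{n-1}(t)$, all bounded by $C(R)$. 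This is exactly the point where $\alpha,\beta\ge 2$ is needed: in view of~{\bf (H1)} these quantities are dominated by the graph norms of $H_x$ and $H_y$, i.e.\ by $\sup_n\|\Phi_n\|_{X^{2,2}_T}\le R$ (in particular $\|V_1\phi^x_n\|_{L^2_x}\lesssim\|\phi^x_n\|_{\mathcal H^2_x}$, and likewise for $V_2$).

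Granting this, set $f_n(t):=\|v^x_n(t)\|_{L^2_x}^2+\|v^y_n(t)\|_{L^2_y}^2$; Cauchy--Schwarz and Young's inequality turn the two energy identities into $f_{n+1}'(t)\le C(R)\bigl(f_n(t)+f_{n+1}(t)\bigr)$ with $f_{n+1}(0)=0$, so Gronwall's lemma gives $\sup_{[0,T_1]}f_{n+1}\le C(R)\,T_1\,e^{C(R)T_1}\sup_{[0,T_1]}f_n$. Choosing $T_1\in\,]0,T]$ so small that $C(R)\,T_1\,e^{C(R)T_1}\le\tfrac12$, we obtain $\sup_{[0,T_1]}f_{n+1}\le\tfrac12\sup_{[0,T_1]}f_n$ for $n\ge1$, hence $\sum_n\|\Phi_{n+1}-\Phi_n\|_{X^{0,0}_{T_1}}<\infty$ and $(\Phi_n)_n$ converges in $X^{0,0}_{T_1}$ to some $\Phi\in C([0,T_1];L^2_x\times L^2_y)$, which is~\eqref{eq:CVL2}. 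For the regularity of $\Phi$: for each $t\in[0,T_1]$ one has $\Phi_n(t)\to\Phi(t)$ in $L^2_x\times L^2_y$ while $\|\Phi_n(t)\|_{2,2}\le R$; extracting a weakly convergent subsequence in the Hilbert space $\mathcal H^2_x\times\mathcal H^2_y$ and noting that the weak and the $L^2$ limits coincide, we get $\Phi(t)\in\mathcal H^2_x\times\mathcal H^2_y$ with $\|\Phi(t)\|_{2,2}\le\liminf_n\|\Phi_n(t)\|_{2,2}\le R$; taking the supremum over $t$ gives $\Phi\in X^{2,2}_{T_1}$. If moreover $(\Phi_n)_n$ is bounded in $X^{\alpha,\beta}_T$ for integers $\alpha,\beta\ge2$, the same argument with $\mathcal H^\alpha_x\times\mathcal H^\beta_y$ in place of $\mathcal H^2_x\times\mathcal H^2_y$ yields $\Phi\in X^{\alpha,\beta}_{T_1}$.

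I expect the main obstacle to be the source estimate above: it must be \emph{linear} in $\|v^y_n(t)\|_{L^2_y}$ — a genuine contraction $\sup f_{n+1}\le\tfrac12\sup f_n$, not merely a H\"older-type bound, is needed for the iterates to converge — whereas a priori one only controls the iterates in the quadratic-growth space $X^{2,2}_T$ and the unbounded coupling $w$ cannot simply be extracted from the integral. Reconciling these two facts, by absorbing the growth of $w$ into the second-order bounds via~{\bf (H2)} and~{\bf (H1)}, is precisely what forces $\alpha,\beta\ge 2$; everything downstream (the Gronwall step, the extraction of the limit, and the lower semicontinuity argument) is then routine.
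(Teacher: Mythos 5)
Your proposal is correct and follows essentially the same route as the paper: subtract consecutive iterates, estimate the source term $(\langle w_n\rangle_y-\langle w_{n-1}\rangle_y)\phi^x_n$ via \textbf{(H2)}, Cauchy--Schwarz and the uniform $X^{2,2}_T$ bound (which is exactly where $\alpha,\beta\ge 2$ enters, as you note), obtain a contraction in $X^{0,0}_{T_1}$ for $T_1$ small, and recover $\Phi\in X^{\alpha,\beta}_{T_1}$ by weak compactness and lower semicontinuity. The only differences are cosmetic: you run the energy estimate as a differential identity plus Gronwall instead of the paper's direct integral bound, and you argue the regularity of the limit pointwise in time rather than via weak-$*$ convergence in $X^{2,2}_T$, both of which are equivalent in substance.
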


\begin{proof}
  Consider \eqref{eq:scheme} at steps $n+1$ and $n$, respectively, and
  subtract the corresponding equations. We find, for $n\ge 1$,
  \begin{align*}
    \(i\d_t -H_x\) \(\phi_{n+1}^x-\phi_n^x\) & = \<w_n\>_y\phi_{n+1}^x
- \<w_{n-1}\>_y\phi_{n}^x\\
&=\<w_n\>_y\(\phi_{n+1}^x-\phi_{n}^x\) +\(
                              \<w_n\>_y-\<w_{n-1}\>_y\)\phi_{n}^x,
  \end{align*}
and  energy estimates yield, for $T_1\in ]0,T]$, since $\Phi_{n+1\mid
  t=0}=\Phi_{n\mid t=0}$,
\begin{equation}\label{eq:CVL2-1}
  \|\phi_{n+1}^x-\phi_n^x\|_{L^\infty_{T_1}L^2_x}\le \int_0^{T_1}\left\|\(
                              \<w_n(s)\>_y-\<w_{n-1}(s)\>_y\)\phi_{n}^x(s)\right\|_{L^2_x}ds.
\end{equation}
In view of {\bf (H2)}, the key term is estimated by
\begin{equation*}
\left|  \<w_n(t)\>_y-\<w_{n-1}(t)\>_y\right| \lesssim
\int_{\R^{d_2}} (V_1(x)+V_2(y)+1) \left|
  |\phi_n^y(t,y)|^2-|\phi^y_{n-1}(t,y)|^2\right| dy .
\end{equation*}
Writing $ |\phi_n^y|^2-|\phi^y_{n-1}|^2=
\Re\((\phi_n^y-\phi^y_{n-1})(\overline{\phi_n^y}+\overline{\phi^y_{n-1}})\)$,
and using Cauchy-Schwarz inequality,
\begin{align}
\nonumber
\left|  \<w_n(t)\>_y-\<w_{n-1}(t)\>_y\right| &\lesssim
(V_1(x)+1) \(\|\phi_n^y\|_{L^2_y} +
  \|\phi_{n-1}^y\|_{L^2_y}\)\|\phi_n^y- \phi_{n-1}^y\|_{L^2_y} \\
  \nonumber
  &\quad+
\(\|V_2\phi_n^y\|_{L^2_y} + 
    \|V_2\phi_{n-1}^y\|_{L^2_y}\)\|\phi_n^y- \phi_{n-1}^y\|_{L^2_y} \\
    \label{toto}
  &\lesssim \(V_1(x)+1)\)\sup_{k\in
    \N}\|\Phi_k\|_{X^{2,2}_T}\|\phi_n^y- \phi_{n-1}^y\|_{L^2_y} . 
\end{align}
Plugging this estimate into \eqref{eq:CVL2-1}, we infer, thanks to
Minkowski inequality,
\begin{align*}
   \|\phi_{n+1}^x-\phi_n^x\|_{L^\infty_{T_1}L^2_x}&\lesssim \sup_{k\in
    \N}\|\Phi_k\|_{X^{2,2}_T}
  \int_0^{T_1}\left\|\(V_1+1)\)\phi_{n}^x(s)\right\|_{L^2_x}\|\phi_n^y(s)-
  \phi_{n-1}^y(s)\|_{L^2_y}  ds\\
  &\lesssim \sup_{k\in
    \N}\|\Phi_k\|_{X^{2,2}_T}^2
  \int_0^{T_1}\|\phi_n^y(s)-\phi_{n-1}^y(s)\|_{L^2_y}  ds\\
  &\lesssim R^2 T_1 \sup_{t\in [0,T_1]} \|\phi_n^y(t)-\phi_{n-1}^y(t)\|_{L^2_y}  .
\end{align*}
We obtain a similar estimate by exchanging the roles of $x$ and $y$,
and so
\begin{equation}\label{eq:contractL2}
  \|\Phi_{n+1}-\Phi_n\|_{X^{0,0}_{T_1}}\lesssim R^2 T_1
  \|\Phi_{n}-\Phi_{n-1}\|_{X^{0,0}_{T_1}}. 
\end{equation}
Fixing $T_1\in ]0,T]$ sufficiently small, the series
\begin{equation*}
  \sum_{n\in \N} \|\Phi_{n+1}-\Phi_n\|_{X^{0,0}_{T_1}}
\end{equation*}
converges geometrically, and $\Phi_n$ converges in $X_{T_1}^{0,0}$, to
some $\Phi \in X_{T_1}^{0,0}$. 

On the other hand, the boundedness of $(\Phi_n)_n$ in $X_{T}^{2,2}$
implies that a subsequence is converging in the weak-* topology of
$X_{T}^{2,2}$. By uniqueness of limits in the sense of distributions,
we infer $\Phi \in X_{T_1}^{2,2}$. The same holds when $X_{T}^{2,2}$
is replaced by $X_T^{\alpha,\beta}$ for $\alpha,\beta\ge 2$. 
\end{proof}


\section{Passing to the limit in the equation}
\label{sec:solution}

We now have all the elements in hands for proving Theorem~\ref{theo:main} by showing that the limit  function $\Phi$ constructed in Lemma~\ref{lem:CVL2} is a solution to equation~\eqref{eq:system} with the properties stated in  Theorem~\ref{theo:main}.

\subsection{Existence of a local solution}
Combining Lemmas~\ref{lem:unif} and \ref{lem:CVL2}, we infer that
under the assumptions of Theorem~\ref{theo:main}, there exists $T_1>0$
such that $\Phi_n\to \Phi$ in $X_{T_1}^{0,0}$. By uniqueness of the
limit, we also have $\Phi_n\rightharpoonup \Phi$ in $X_{T_1}^{\alpha,\beta}$
(and no extraction of a subsequence is needed).
 Resuming the estimates from the proof of Lemma~\ref{lem:CVL2},
 we observe that for $n,m\in\N$, $t\in [0,T_1]$ and $x\in\R^{d_1}$,
  \begin{align*}
  |\<w_n(t)\>_y-\<w_m(t)\>_y|&= \left|\int_{\R^{d_2}}w(x,y)\(
                       |\phi_n^y(t,y)|^2-|\phi_m^y(t,y)|^2\)dy\right| \\
   &\lesssim \(V_1(x)+1\)\sup_{k\in
     \N} \|\Phi_k\|_{X^{2,2}_T}\|\phi_n^y(t)-\phi_m^y(t)\|_{L^2_y}.
 \end{align*}
Passing to the limit $m\rightarrow +\infty$, we obtain that 
 for $n\in\N$, $t\in [0,T_1]$ and $x\in\R^{d_1}$,
 \begin{align*}
  |\<w_n(t)\>_y-\<w(t)\>_y|
   &\lesssim \(V_1(x)+1\)\sup_{k\in
     \N} \|\Phi_k\|_{X^{2,2}_T}\|\phi_n^y(t)-\phi^y(t)\|_{L^2_y}.
 \end{align*}
 Therefore, keeping the same notation $R$ as from Lemma~\ref{lem:CVL2},
 \begin{align*}
   \left\| \<w_n(t)\>_y\phi_{n+1}^x(t) -
   \<w(t)\>_y\phi^x(t)\right\|_{L^2_x}&\lesssim
   \left\| \(\<w_n(t)\>_y-\<w(t)\>_y\)\phi_{n+1}^x(t)\right\|_{L^2_x}\\
&\quad+ \left\| \<w(t)\>_y\(\phi_{n+1}^x(t)
  -\phi^x(t)\)\right\|_{L^2_x}\\
 &\lesssim
  R \|\phi_n^y(t)-\phi^y(t)\|_{L^2_y}\left\| (V_1+1)\phi_{n+1}^x(t)\right\|_{L^2_x}\\
&\quad+ \left\| \(V_1+1+\|\phi^y\|^2_{L^\infty_{T_1}\mathcal H^1_y}\)\(\phi_{n+1}^x(t)
  -\phi^x(t)\)\right\|_{L^2_x},
 \end{align*}
 where we have used \eqref{eq:est-wn} and the normalization
 \eqref{eq:normL2}. The first term on the right hand side goes to zero
 as $n\to \infty$, uniformly in $t\in [0,T_1]$. So does the last one
 in the case $\alpha,\beta\ge 3$, since by interpolation $\Phi_n$ then
 converges to $\Phi$ strongly in $X^{2,2}_T$. In the case where
 $\alpha$ or $\beta$ is equal to $2$, we can only claim a weak
 convergence, 
\begin{equation*}
   \<w_n\>_y \phi_{n+1}^x\Tendweak n \infty \<w\>_y \phi^x\quad\text{in
  }L^\infty([0,T_1];L^2_x) \text{ weak-$*$}.
  \end{equation*}
  Similarly,
  \begin{equation*}
  \<w_n\>_x \phi_{n+1}^y\Tendweak n \infty \<w\>_x \phi^y\quad\text{in
    }L^\infty([0,T_1];L^2_y) \text{ weak-$*$},
\end{equation*}
and $\Phi $ solves \eqref{eq:system} for $t\in [0,T_1]$, in the sense
of distributions. In view of the regularity $\Phi\in X_{T_1}^{\alpha,\beta}$, Duhamel's
formula,
\begin{align*}
  & \phi^x(t) = e^{-itH_x}\phi_0^x -i\int_0^t e^{-i(t-s)H_x} \(
    \<w\>_y\phi_x\)(s)ds,\\
& \phi^y(t) = e^{-itH_y}\phi_0^y -i\int_0^t e^{-i(t-s)H_y} \(
    \<w\>_x\phi_y\)(s)ds,
\end{align*}
then shows the continuity in time $\Phi\in C([0,T_1];L^2_x\times L^2_y)$. 

\subsection{Uniqueness}
\label{sec:uniqueness}

At this stage, it is rather clear that uniqueness holds in
$X^{2,2}_T$, no matter how large $\alpha$ and $\beta$ are. 
Suppose that $\tilde\Phi \in  X_{T}^{2,2}$ is another solution to
\eqref{eq:system} for $T>0$: the system satisfied by $\Phi-\tilde
\Phi$ is similar to the one satisfied by $\Phi_{n+1}-\Phi_n$, and
considered in the proof of Lemma~\ref{lem:CVL2}. Since
$\Phi,\tilde\Phi \in  X_{T}^{2,2}$, there exists $R>0$ such that 
\begin{equation*}
  \|\Phi\|_{X_{T}^{2,2}}+\|\tilde\Phi\|_{X_{T}^{2,2}}\le R, 
\end{equation*}
and repeating the computations presented in the proof of
Lemma~\ref{lem:CVL2}, we obtain, for any $T_1\in ]0,T]$, 
\begin{equation*}
    \|\Phi-\tilde \Phi\|_{X_{T_1}^{0,0}}\le
  CT_1R \|\Phi-\tilde\Phi\|_{X_{T_1}^{0,0}}.
\end{equation*}
Picking $T_1>0$ such that $CT_1R <1$ shows that $\Phi\equiv \tilde
\Phi$ for $t\in [0,T_1]$, and we infer that $\Phi\equiv \tilde \Phi$
on $[0,T]$ by covering $[0,T]$ by finitely many intervals of length at
most $T_1$.

\subsection{Conservations}
\label{sec:conservations}

We now address the second point in Theorem~\ref{theo:main}: we assume
that \eqref{eq:system} has a unique solution $\Phi\in X^{2,2}_T$ for
some $T>0$. This implies in particular, in view of~\eqref{eq:system},
that $\d_t\phi^x\in L^\infty([0,T];L^2_x)$ and $\d_t\phi^y\in
L^\infty([0,T];L^2_y)$, and the multiplier techniques evoked below are
justified without using regularizing argument as in
e.g. \cite{CazCourant}.

For the conservation of the $L^2$-norms, multiply the first equation in
\eqref{eq:system} by $\overline{\phi^x}$, integrate in space on
$\R^{d_1}$, and consider the imaginary part: we readily obtain
\begin{equation*}
  \frac{d}{dt}\|\phi^x(t)\|_{L^2_x}^2=0.
\end{equation*}
We proceed similarly for $\phi^y$, and the conservation of the
$L^2$-norms follows.

For the energy, consider the multiplier $\d_t \overline{\phi^x}$ in
the equation for $\phi^x$: as evoked above, all the products are
well-defined, in the worst possible case as products of two $L^2$
functions. Integrate in space and consider the real part: we obtain
\begin{equation*}
  \frac{d}{dt}E(t)=0.
\end{equation*}

\subsection{Globalization}
\label{subsec:glob}

In view of Lemmas~\ref{lem:unif} and \ref{lem:CVL2}, it suffices to
 prove a priori estimates on $\|\Phi\|_{X_T^{2,2}}$, showing that this
 quantity is locally bounded in $T$, to infer that $ \Phi\in
 X_{T}^{2,2}$ for all $T>0$, and then globalize the
 solution by the standard ODE alternative. 
 \smallbreak

 We use the
conservation of the total energy, whose expression we develop:
\begin{align*}
  E(t)&=\(H_x\phi^x(t),\phi^x(t)\)_{L^2_x}+
        \(H_y\phi^y(t),\phi^y(t)\)_{L^2_y}\\
  &\quad+
  \iint_{\R^{d_1}\times\R^{d_2}}w(x,y)|\phi^x(t,x)|^2|\phi^y(t,y)|^2dxdy\\
  &= \frac{1}{2}\|\nabla_x \phi^x(t)\|_{L^2(\R^{d_1})}^2
  +\int_{\R^{d_1}}V_1(x) |\phi^x(t,x)|^2dx
  + \frac{1}{2}\|\nabla_y
    \phi^y(t)\|_{L^2(\R^{d_2})}^2 \\
  &\quad
  +\int_{\R^{d_2}}V_2(y) |\phi^y(t,y)|^2dy +
  \iint_{\R^{d_1}\times\R^{d_2}}w(x,y)|\phi^x(t,x)|^2|\phi^y(t,y)|^2dxdy.
\end{align*}
Since $c_0<1$ in {\bf (H2)}, we infer
\begin{align*}
  E(t)&\ge  \frac{1}{2}\|\nabla_x \phi^x(t)\|_{L^2(\R^{d_1})}^2 +\frac{1}{2}\|\nabla_y
    \phi^y(t)\|_{L^2(\R^{d_2})}^2 
 + (1-c_0)\int_{\R^{d_1}}V_1(x)
        |\phi^x(t,x)|^2dx\\
  &\quad+(1-c_0)\int_{\R^{d_2}}V_2(y) |\phi^y(t,y)|^2dy
  -c_0C\int_{\R^{d_1}} |\phi^x(t,x)|^2dx -c_0C\int_{\R^{d_2}} |\phi^y(t,y)|^2dy.
\end{align*}
The conservations established above yield
\begin{align*}
  \frac{1}{2}\|\nabla_x \phi^x(t)\|_{L^2(\R^{d_1})}^2 &+\frac{1}{2}\|\nabla_y
    \phi^y(t)\|_{L^2(\R^{d_2})}^2 
 + (1-c_0)\int_{\R^{d_1}}V_1(x)
        |\phi^x(t,x)|^2dx\\
  &+(1-c_0)\int_{\R^{d_2}}V_2(y) |\phi^y(t,y)|^2dy\le E(0)+2c_0C.
\end{align*}
This is the coercivity property announced in the introduction, showing
that there exists $M$ depending only on $\|\Phi_0\|_{1,1}$  such that
\begin{equation*}
  \|\Phi\|_{X_T^{1,1}}\le M,
\end{equation*}
for any interval $[0,T]$ on which the solution is well-defined. 
Proceeding like in the proof of Lemma~\ref{lem:unif}, we have
 \begin{equation}\label{eq:reed}
  \sup_{t\in[0,T]} \| H_x \phi^x(t)\| _{L^2_x}^2 \le \| H_x
  \phi_{0}^x\| _{L^2_x}^2 + 2\int_0^T \left|\( [ H_x, \langle
  w\rangle_y(t)] \phi^x(t), H_x \phi^x(t)\) _{L^2_x}\right|dt.
\end{equation}
In view of Lemma~\ref{lem:recursive} with $k=1$, $f=\phi^x$ and
$g=H_x\phi^x$, we infer
\begin{align*}
  \sup_{t\in[0,T]} \| H_x \phi^x(t)\| _{L^2_x}^2 & \le \| H_x
  \phi_{0}^x\| _{L^2_x}^2+ C \|\phi^y\|^2_{L^\infty_T \mathcal
   H^1_y}\int_0^T \|\phi^x(t)\|_{\mathcal H^2_x}
  \| H_x\phi^x(t)\| _{L^2_x}dt\\
  & \le \| H_x
  \phi_{0}^x\| _{L^2_x}^2+ C M^2\int_0^T \|\phi^x(t)\|_{\mathcal H^2_x}^2
  dt.
\end{align*}
The conservation of the $L^2$-norm of $\phi^x$ implies
\begin{equation*}
   \sup_{t\in[0,T]} \|\phi^x(t)\| _{\mathcal H^2_x}^2 \le \| 
  \phi^x\| _{\mathcal H^2_x}^2+ C M^2\int_0^T \|\phi^x(t)\|_{\mathcal H^2_x}^2
  dt,
\end{equation*}
hence an exponential a priori control of the $\mathcal H^2_x$-norm of
$\phi^x(t)$ by Gronwall lemma. The same holds for $\phi^y(t)$, hence
the conclusion of Theorem~\ref{theo:main}.


\section{Proof of
  Lemma~\ref{ex:extra}}\label{sec:sufficient}

We briefly explain why \eqref{hyp:stronger} implies
{\bf (H3)}$_{2,2}$, thanks to an integration by parts, in view of
\eqref{eq:temperance}. Typically, for $f_1,f_2\in \Sch(\R^{d_1})$, 
\[
  \< [w(x,y) , H_x] f_1,f_2\>_{L^2_x} =  \frac{1}{2}\<\Delta_x
  w (\cdot,y)f_1 ,f_2\>_{L^2_x}  + \< \nabla_x w(\cdot,y) \cdot \nabla f_1,   f_2 \>_{L^2_x}  .
  \]
  Therefore, for almost all $y\in\R^{d_2}$, Cauchy-Schwarz inequality yields
  \begin{align*}
 \left|   \< [w(x,y) , H_x] f_1,f_2 \>_{L^2_x}  \right| 
& \le \frac 12  \| |\Delta_xw(\cdot,y)|^{1/2}f_1\|_{L^2_x} 
                                                           \| |\Delta_xw(\cdot,y)|^{1/2}f_2\|_{L^2_x} \\
    &\quad+ \| \nabla_x f_1\|_{L^2_x} \|  \nabla_x w(\cdot,y)
      f_2\|_{L^2_x}. 
\end{align*}
Using 
 \eqref{hyp:stronger},
  \begin{align*}
  \| |\Delta_xw(\cdot,y)|^{1/2}f\|^2_{L^2_x} &   \lesssim \| \sqrt{ V_1} f\|^2_{L^2_x} + (1+V_2(y)) \| f\|^2_{L^2_x}
   \lesssim \|f\|^2_{\mathcal H^1_x}  + (1+V_2(y)) \| f\|^2_{L^2_x},\\
 \|  \nabla_x w(\cdot,y)  f\|_{L^2_x}& \lesssim \|
 (\sqrt{V_1}+V_2(y)+1)f\|_{L^2_x}
   \lesssim  \|f\|_{\mathcal H^1_x}  + (1+V_2(y)) \| f\|_{L^2_x}.
  \end{align*}
  We deduce the expected  relation for $k=\ell=1$:
  \[
 \left|   \< [w(x,y) , H_x] f_1,f_2 \>_{L^2_x} \right| 
 \lesssim (1+V_2(y)) \|f_1\|_{\mathcal H^1_x} \| f_2\|_{\mathcal H^1_x}.
 \]
 For $k=2$, write
 \begin{align*}
 \left|   \< H_x  [w(x,y) , H_x] f_1,f_2 \>_{L^2_x} \right| &  =  \left|   \<
 [w(x,y) , H_x] f_1, H_x f_2 \>_{L^2_x}\right| \\
   &\le \frac{1}{2}\left| \<\Delta_x
  w (\cdot,y)f_1 ,H_xf_2\>_{L^2_x} \right| + \left|\< \nabla_x w(\cdot,y) \cdot \nabla f_1,  H_x f_2 \>_{L^2_x} \right|\\
 & \lesssim \|  \(1+V_1+ V_2(y)\)f_1\|_{L^2_x} \| H_x f_2\|_{L^2_x}\\
  &\quad +
   \|\nabla_x w(\cdot,y) \cdot \nabla f_1\|_{L^2_x}\| H_x f_2\|_{L^2_x}\\
   & \lesssim \|  f_1\|_{\mathcal H^2_x} \|
    f_2\|_{\mathcal H^2_x} +  V_2(y)\|  f_1\|_{L^2_x} \|
     f_2\|_{\mathcal H^2_x}\\
   &\quad+ \|\nabla_x w(\cdot,y) \cdot \nabla
     f_1\|_{L^2_x}\|f_2\|_{\mathcal H^2_x},
 \end{align*}
 where we have used the estimate $\| H_x f\|_{L^2_x} \le\|
 f\|_{\mathcal H_x^2}$. For the last term, \eqref{hyp:stronger} yields
 \begin{align*}
   \|\nabla_x w(\cdot,y) \cdot \nabla
     f_1\|_{L^2_x}&\lesssim \left\|\(\sqrt{V_1}+V_2(y)+1\) \nabla
                    f_1\right\|_{L^2_x}\\
   &\lesssim \left\|\sqrt{V_1} \nabla
     f_1\right\|_{L^2_x}+\(V_2(y)+1\)\|\nabla f_1\|_{L^2_x} \\
   &\lesssim \left\|\sqrt{V_1} \nabla
     f_1\right\|_{L^2_x}+\(V_2(y)+1\)\|f\|_{L^2_x}^{1/2}\|\Delta
     f\|_{L^2_x}^{1/2}\\
   &\lesssim \left\|\sqrt{V_1} \nabla
     f_1\right\|_{L^2_x}+\(V_2(y)+1\)\|f_1\|_{\mathcal H^2_x}.
 \end{align*}
For the first term on the last right hand side, we use
 an integration by parts:
\begin{align*}
  \left\|\sqrt{V_1}\nabla f_1\right\|_{L^2_x}^2 &= \int_{\R^{d_1}}V_1(x) \nabla
                                     f_1(x)\cdot \nabla f_1(x)dx \\
  & = -\int_{\R^{d_1}}V_1(x) 
                                     f_1(x)\Delta f_1(x)dx -
    \int_{\R^{d_1}} f_1(x)\nabla V_1(x) \cdot \nabla f_1(x)dx .
\end{align*}
By Cauchy-Schwarz inequality, the first term on the right hand side is
estimated by
\begin{equation*}
  \|V_1f_1\|_{L^2_x}\|\Delta f_1\|_{L^2_x} \le 2 \|H_x f_1\|_{L^2_x}^2.
\end{equation*}
Invoking \eqref{eq:temperance},
and using Cauchy-Schwarz inequality again,
\begin{align*}
 \left| \int_{\R^{d_1}} f(x)\nabla V_1(x) \cdot \nabla
  f(x)dx\right|&\lesssim \int_{\R^{d_1}} (1+V_1(x))|f(x)| | \nabla f(x)|dx
                 \lesssim \| (1+V_1)f\|_{L^2_x}\|\nabla f\|_{L^2_x}\\
  &\lesssim \(\|f\|_{L^2_x}+\|H_x f\|_{L^2_x}\) \|f\|_{L^2_x}^{1/2}\|\Delta
    f\|_{L^2_x}^{1/2}\\
  & \lesssim  \|H_x f\|_{L^2_x}^{1/2}
    \|f\|_{L^2_x}^{3/2}+\|H_x f\|_{L^2_x}^{3/2}
    \|f\|_{L^2_x}^{1/2}\lesssim \|f\|_{L^2_x}^2 + \|H_x f\|_{L^2_x}^2,
\end{align*}
where we have used Young inequality for the last estimate.
 \begin{align*}  
 \left|   \< [w(x,y) , H_x] H_x f_1, f_2 \> \right| & = \left|   \< H_x f_1, [w(x,y) , H_x] f_2 \> \right| 
  \le\| H_x f_1\|_{L^2_x}  \| \Delta_x w(\cdot,y) f_2\|_{L^2_x} .
 \end{align*}
 To estimate $\<[w(\cdot,y),H_x]H_x f_1,f_2\>_{L^2_x}$, we use the
 self-adjointness of $H_x$ and write
 \begin{equation*}
   \<[w(\cdot,y),H_x]H_x f_1,f_2\>_{L^2_x}=\<H_x f_1,[w(\cdot,y),H_x]f_2\>_{L^2_x}.
 \end{equation*}
 We use the above estimate, where the roles of $f_1$ and $f_2$ have
 been swapped, to conclude that the first inequality in {\bf
   (H3)}$_{2,2}$ holds. The proof of the second one is
 similar. 
 

\appendix

\section{Tangent space}\label{sec:tangent}
For completeness, we give the elementary considerations for determining the tangent spaces 
of the Hartree manifold, Lemma~\ref{lem:tangent}.

\begin{proof}
We consider a curve $\Gamma(s) = \varphi^x(s)\otimes \varphi^y(s)\in\mathcal M$ with $\Gamma(0)=u$.
Then, 
\[
\dot\Gamma(0) =\dot \varphi^x(0)\otimes\varphi^y 
+ \varphi^x\otimes\dot \varphi^y(0),
\]
which verifies the claimed representation of any tangent function as 
\[
v=v^x\otimes\varphi^y + \varphi^x\otimes v^y.
\] 
Let us  consider $a=(a^x,a^y)\in\C^2$ 
with $a^x+a^y = 0$. We set $w^x = v^x + a^x \varphi^x$ and $w^y = v^y + a^y \varphi^y$. 
Then, $w = w^x\otimes\varphi^y + \varphi^x\otimes w^y$ satisfies
\[
w = v^x\otimes\varphi^y + \varphi^x\otimes v^y + (a^x+a^y)\varphi^x\otimes\varphi^y = v.
\]
Choosing $a^x = -\langle \varphi^x,v^x\rangle /\langle \varphi^x,\varphi^x\rangle$ 
and $a^y = -a^x$, we obtain a representation of $v$ satisfying the claimed gauge condition. 
We verify that this condition implies uniqueness. 
We assume that $v = v^x\otimes\varphi^y + \varphi^x\otimes v^y = 
\tilde v^x\otimes\varphi^y + \varphi^x\otimes \tilde v^y$ with 
$\langle \varphi^x,v^x\rangle = \langle \varphi^x,\tilde v^x\rangle =0$. Then, 
for any $\vartheta^y\in L^2_y$,
\[
\langle\varphi^x\otimes\vartheta^y,v\rangle 
= \langle\varphi^x,\varphi^x\rangle \langle\vartheta^y,v^y\rangle
= \langle\varphi^x,\varphi^x\rangle \langle\vartheta^y,\tilde v^y\rangle,
\]
which implies $v^y = \tilde v^y$. Then, for any $\vartheta^x\in L^2_y$,
\begin{align*}
\langle\vartheta^x\otimes\varphi^y,v\rangle_{L^2_{x,y}} 
&= \langle\vartheta^x,v^x\rangle_{L^2_x} \langle\varphi^y,\varphi^y\rangle_{L^2_y} +  
\langle\vartheta^x,\varphi^x\rangle_{L^2_x} \langle\varphi^y,v^y\rangle_{L^2_y}\\
&= \langle\vartheta^x,\tilde v^x\rangle_{L^2_x} \langle\varphi^y,\varphi^y\rangle_{L^2_y} +  
\langle\vartheta^x,\varphi^x\rangle_{L^2_x} \langle\varphi^y,\tilde v^y\rangle_{L^2_y}, 
\end{align*}
which implies $v^x = \tilde v^x$. Choosing $v^x = 0$ and $v^y = \varphi^y$, we have $v=u$ 
so that $u\in\mathcal T_u\mathcal M$.
\end{proof}

\section{Coulombic type coupling}
\label{sec:coulomb}


We recall standard definition and
results.
\begin{definition}[Admissible pairs in $\R^3$]\label{def:adm}
 A pair $(q,r)$ is admissible if $q,r\ge  2$,
 and
\[\frac{2}{q}= 3\left( \frac{1}{2}-\frac{1}{r}\right).\]
\end{definition}
As the range allowed for $(q,r)$ is compact, we set, for $I\subset \R$
a time interval,
\begin{equation*}
  \|u\|_{S(I)} = \sup_{(q,r)\text{
      admissible}}\|u\|_{L^q(I;L^r(\R^3))}. 
\end{equation*}
In view of \cite{Fujiwara} and \cite{KT}, we have:

\begin{proposition}\label{prop:strichartz}
Let $d=3$ and $\mathbf V\in \mathcal Q$. Denote
$\mathbf H=-\frac{1}{2}\Delta + \mathbf V$.\\
  $(1)$ There exists $C_{\rm hom}$  such that for all interval $I$ such that $|I|\le 1$,
\[
\|e^{-it\mathbf H} \varphi\|_{S(I)} \le C_{\rm hom}
\|\varphi \|_{L^2},\quad \forall \varphi\in L^2(\R^3).
\]
$(2)$ Denote
\begin{equation*}
  D(F)(t,x) = \int_0^t e^{-i(t-\tau)\mathbf H}F(\tau,x)\mathrm{d}\tau.
\end{equation*}
There exists $C_{\rm inhom}$
    such that for all interval $I\ni 0$ such that $|I|\le 1$,
\begin{equation*}
      \left\lVert D(F)
      \right\rVert_{S(I)}\le C_{\rm inhom} \left\lVert
      F\right\rVert_{S(I)^*}.
\end{equation*}
\end{proposition}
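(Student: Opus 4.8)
The plan is to obtain both bounds from the abstract Strichartz machinery of Keel and Tao \cite{KT}, which requires only two inputs over \emph{short} time intervals: the $L^2$-unitarity of the propagator, and a dispersive decay estimate. The first holds because $\mathbf H=-\tfrac12\Delta+\mathbf V$ is self-adjoint for $\mathbf V\in\mathcal Q$, by the Faris--Lavine criterion (at most quadratic growth). The second is precisely the content of Fujiwara's short-time parametrix \cite{Fujiwara}: there is $\delta\in(0,1]$, depending only on $\mathbf V$ through the constants defining $\mathcal Q$, such that for $0<|t|\le\delta$ the operator $e^{-it\mathbf H}$ is an oscillatory integral operator with smooth kernel $K(t,x,y)$ obeying $|K(t,x,y)|\le C|t|^{-3/2}$; equivalently
\[
\|e^{-it\mathbf H}\varphi\|_{L^\infty(\R^3)}\le C|t|^{-3/2}\|\varphi\|_{L^1(\R^3)},\qquad 0<|t|\le\delta.
\]
The restriction to small $|t|$ is not a technical artifact but genuinely necessary --- already for the harmonic oscillator dispersion fails at the focusing times $t\in\pi\Z$ --- which is also why the statement is phrased for $|I|\le 1$.

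With these inputs, I would invoke the Keel--Tao theorem on any interval $J$ with $|J|\le\delta$: there every time difference occurring is $\le\delta$, so the decay hypothesis holds throughout $J$, and with Hilbert space $L^2(\R^3)$ and decay exponent $\sigma=\tfrac32$ the abstract $\sigma$-admissibility condition is exactly $\tfrac2q=3(\tfrac12-\tfrac1r)$ of Definition~\ref{def:adm}; since $\sigma>1$ the endpoint $(q,r)=(2,6)$ is included. One gets $\|e^{-it\mathbf H}\varphi\|_{S(J)}\le C_1\|\varphi\|_{L^2}$ and, by duality together with the Christ--Kiselev lemma (or directly from the inhomogeneous part of \cite{KT}), $\|D(F)\|_{S(J)}\le C_2\|F\|_{S(J)^*}$, with $C_1,C_2$ depending only on $\mathbf V$. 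The double-endpoint case is harmless here: since $\|F\|_{S(J)^*}$ is an \emph{infimum} over admissible source pairs, the endpoint target $(2,6)$ may always be matched with an interior admissible source pair such as $(4,3)$, for which $\tfrac1q+\tfrac1{\tilde q}=\tfrac34<1$, so the Christ--Kiselev restriction is met. All these estimates are invariant under time translation.

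It then remains to pass from $|J|\le\delta$ to arbitrary $I$ with $|I|\le1$, losing only a factor depending on $\delta$. Partition $I=\bigcup_{j=1}^{N}I_j$ into $N\le\lceil1/\delta\rceil$ consecutive subintervals of length $\le\delta$ with left endpoints $t_j$. For the homogeneous bound, on $I_j$ write $e^{-it\mathbf H}\varphi=e^{-i(t-t_j)\mathbf H}(e^{-it_j\mathbf H}\varphi)$, use $\|e^{-it_j\mathbf H}\varphi\|_{L^2}=\|\varphi\|_{L^2}$ together with the short-interval estimate, and sum the $N$ contributions via $\|u\|_{L^q(I)}^q=\sum_j\|u\|_{L^q(I_j)}^q$ with $q\ge2$. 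For the inhomogeneous bound, on $I_j$ decompose $D(F)(t)$ into the diagonal block $\int_{t_j}^t e^{-i(t-\tau)\mathbf H}F(\tau)\,d\tau$, handled by the short-interval inhomogeneous estimate on $I_j$, and the off-diagonal blocks $\int_{I_k}e^{-i(t-\tau)\mathbf H}F(\tau)\,d\tau=e^{-i(t-t_{k+1})\mathbf H}g_k$ for $k<j$, where $g_k=D(\mathbf 1_{I_k}F)(t_{k+1})\in L^2$ and $\|g_k\|_{L^2}$ is controlled by the $(q,r)=(\infty,2)$ case of the short-interval inhomogeneous estimate; applying the translated homogeneous estimate to each off-diagonal block and summing the $O(N)$ pieces over the $N$ subintervals yields $\|D(F)\|_{S(I)}\lesssim C(\delta)\|F\|_{S(I)^*}$, uniformly in $I$ with $|I|\le1$.

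The main obstacle is concentrated in the first step: the $|t|^{-3/2}$ short-time dispersive estimate rests on Fujiwara's WKB / Fourier-integral-operator parametrix, which is delicate and relies essentially on the smoothness of $\mathbf V$ and the boundedness of all its derivatives of order $\ge2$ (the defining property of $\mathcal Q$). Everything downstream --- applying \cite{KT} on short intervals and gluing --- is routine, the only care needed being the bookkeeping of constants and the remark above that keeps us clear of the double-endpoint inhomogeneous estimate.
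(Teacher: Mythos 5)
Your argument is correct and is essentially the paper's own proof, which consists of invoking Fujiwara's short-time parametrix for the $|t|^{-3/2}$ dispersive bound and the Keel--Tao theorem for the resulting Strichartz estimates on short intervals, the extension to arbitrary $|I|\le 1$ being exactly the routine gluing you carry out. The only slip is your aside that the double-endpoint retarded case can be avoided because $\|F\|_{S(I)^*}$ is an infimum over source pairs --- one cannot ``choose'' the source pair when $F$ lies only in the endpoint dual space $L^2(I;L^{6/5})$ --- but this is immaterial since, as you also note, the inhomogeneous part of Keel--Tao already contains the double-endpoint retarded estimate when $\sigma=3/2>1$.
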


The existence of (local in time) Strichartz estimates of
Proposition~\ref{prop:strichartz} is the main ingredient of the proof
of Theorem~\ref{theo:CoulombStrichartz}. Actually, as soon as such
estimates are available for the operators $\mathbf H_x$ and $\mathbf
H_y$, then 
Theorem~\ref{theo:CoulombStrichartz} remains valid.
As mentioned in the introduction, such cases can be found in
e.g. \cite{RodnianskiSchlag} or \cite{BPST04}. On the other hand, we
emphasize that for superquadratic potentials, like   $V_1$ in
Example~\ref{ex:companion},
Strichartz estimates suffer a loss of regularity; see \cite{Miz14,YajZha04}.

\begin{remark}
  The case of a harmonic potential, $\mathbf V(x)=|x|^2$, shows that
  $\mathbf H$ may have eigenvalues, and explains why the above time
  intervals $I$ are required to have finite length. 
\end{remark}
\begin{remark}
  The potential $\mathbf V$ may also be time dependent, in view the
   original framework of \cite{Fujiwara}:
  $\mathbf V\in L^\infty_{\rm loc}(\R_t\times \R_x^3)$ is real-valued, and 
  smooth with respect 
  to the space variable: for (almost) all $t\in \R$, $x\mapsto \mathbf
  V(t,x)$
  is a $C^\infty$ map. Moreover, it is at most quadratic in space: 
  \begin{equation*}
   \forall T>0,\quad  \forall \alpha \in \N^d, \ |\alpha|\ge 2, \quad
    \d_x^\alpha \mathbf V\in L^\infty([-T,T]\times \R_x^3). 
  \end{equation*}
Under these assumptions, suitable modifications of
Proposition~\ref{prop:strichartz} are needed, but they do not alter
the conclusion of Theorem~\ref{theo:CoulombStrichartz} (see
\cite{Ca11}). See also \cite{RodnianskiSchlag} for another class of
time dependent potentials. 
\end{remark}
\begin{proof}[Proof of Theorem~\ref{theo:CoulombStrichartz}]
We give the main technical steps of the proof,  and refer to  \cite{CazCourant}
for details.
 By Duhamel's formula, we write \eqref{eq:system} as
\begin{align*}
\phi^x(t)&=e^{-it\mathbf H_x}\phi_0^x-
i\int_0^{t}e^{-i(t-\tau)\mathbf H_x}\(v_1\phi^x+ \(W \ast |\phi^y|^{2}\)\phi^x\)(\tau)d\tau=:F_1(\phi^x,\phi^y),\\
\phi^y(t)&=e^{-it\mathbf H_y}\phi_0^y-
i\int_0^{t}e^{-i(t-\tau)\mathbf H_y}\(v_2\phi^y+\(W \ast
           |\phi^x|^{2}\)\phi^y\)(\tau)d\tau=:F_2(\phi^x,\phi^y). 
\end{align*}
Theorem~\ref{theo:CoulombStrichartz} follows from a standard fixed
point argument based on Strichartz estimates. For $0<T\le 1$, we 
introduce
 \begin{equation*}
\begin{aligned}
Y(T)&=\{(\phi^x,\phi^y)\in C([0,T]; L^{2}(\R^3))^2:\quad
 \|\phi^x\|_{S([0,T])}\le
2C_{\rm hom}\|\phi_0^x\|_{L^{2}},\\
&\qquad \|\phi^y\|_{S([0,T])}\le
2C_{\rm hom}\|\phi_0^y\|_{L^{2}}\},
\end{aligned}
\end{equation*}
and the distance 
\[d(\phi_{1},
\phi_{2})=\|\phi_{1}-\phi_{2}\|_{S([0,T]},\]
where $C_{\rm hom}$ stems from Proposition~\ref{prop:strichartz}.
Then $(Y(T), d)$ is a complete metric space.

 By using Strichartz estimates and H\"older inequality, we have:
\begin{equation*}
\begin{aligned}
\|F_1(\phi^x,\phi^y)\|_{S([0,T])}&\le C_{\rm hom}
\|\phi_0^x\|_{L^{2}}+C_{\rm inhom}\(\|v_1\phi^x\|_{S([0,T])^*}+\left\|\(W\ast|\phi^y|^2\)\phi^x\right\|_{S([0,T])^*}\) ,
\end{aligned}
\end{equation*}
 for any $(\phi^x,\phi^y)\in Y(T)$. By assumption (see
 Theorem~\ref{theo:CoulombStrichartz}), we may write 
 \begin{equation*}
   v_1=v_1^p+v_1^\infty,\quad
   v_2=v_2^p+v_2^\infty,\quad W=W^p+W^\infty,\quad v_1^q,v_2^q,W^q\in
   L^q(\R^3), 
 \end{equation*}
and the value $p$ can obviously be the same for the three potentials,
by taking the minimum between the three $p$'s if needed. Regarding
$\|v_1\phi^x\|_{S([0,T])^*}$, we write 
\begin{align*}
  \|v_1^\infty \phi^x\|_{S([0,T])^*}&\le \|v_1^\infty
  \phi^x\|_{L^1([0,T];L^2)}\le \|v_1^\infty\|_{L^\infty} \|
  \phi^x\|_{L^1([0,T];L^2)}\\
&\le T \|v_1^\infty\|_{L^\infty} \|
  \phi^x\|_{L^\infty\le ([0,T];L^2)}\le T \|v_1^\infty\|_{L^\infty} \|
  \phi^x\|_{S([0,T])}.
\end{align*}
Let $r$ be such that
\begin{equation*}
  \frac{1}{r'}=\frac{1}{r}+\frac{1}{p}\Longleftrightarrow 1= \frac{2}{r}+\frac{1}{p}.
\end{equation*}
Note that this exponent is the one introduced in the statement of
Theorem~\ref{theo:CoulombStrichartz}. 
The assumption $p>3/2$ implies $2\le r<6$. Let $q$ be such that
$(q,r)$ is 
admissible: $r<6$ implies $q>2$. H\"older inequality yields
\begin{align*}
  \|v_1^p \phi^x\|_{S([0,T])^*}&\le \|v_1^p
  \phi^x\|_{L^{q'}([0,T];L^{r'})}\le \|v_1^p\|_{L^p}
                                 \|\phi^x\|_{L^{q'}([0,T];L^{r})}\\ 
 & \le T^{1/\theta} \|v_1^p\|_{L^p} \|\phi^x\|_{L^{q}([0,T];L^{r})}
  \le T^{1/\theta} \|v_1^p\|_{L^p} \|\phi^x\|_{S([0,T])},
\end{align*}
where $\theta$ is such that
\begin{equation*}
  \frac{1}{q'}=\frac{1}{q}+\frac{1}{\theta}. 
\end{equation*}
Note that $\theta$ is finite, as $q>2$ 
\smallbreak

For the convolution term, first write 
\begin{align*}
    \left\|\(W^\infty \ast
           |\phi^y|^{2}\)\phi^x\right\|_{S([0,T])^*}&\le \left\|\(W^\infty \ast
           |\phi^y|^{2}\) 
  \phi^x\right\|_{L^1([0,T];L^2)}\\
&\le \left\|W^\infty \ast
           |\phi^y|^{2} \right\|_{L^1([0,T];L^\infty)}
 \| \phi^x\|_{L^\infty([0,T];L^2)}\\
& \le T \|W^\infty \|_{L^\infty} \|\phi^y\|_{L^\infty([0,T];L^2)}^2 \|
                                       \phi^x\|_{L^\infty([0,T];L^2)}\\ 
&\le T \|W^\infty \|_{L^\infty} \|\phi^y\|_{S([0,T])}^2 \|
                                       \phi^x\|_{S([0,T])}.
\end{align*}
Introduce $r_1$ such that
\begin{equation}\label{eq:rp}
  \frac{1}{r_1'}=\frac{1}{r_1}+\frac{1}{2p}\Longleftrightarrow
  2=\frac{4}{r_1}+\frac{1}{p}\Longleftrightarrow  1+\frac{1}{2p} =
  \frac{1}{p}+\frac{2}{r_1}. 
\end{equation}
The assumption $p>3/2$ implies $2\le r_1<3$. Let $q_1$ be such that
$(q_1,r_1)$ is admissible: $q_1>4$. H\"older inequality yields
\begin{align*}
    \left\|\(W^p \ast
           |\phi^y|^{2}\)\phi^x\right\|_{S([0,T])^*}&\le \left\|\(W^p \ast
           |\phi^y|^{2}\) 
  \phi^x\right\|_{L^{q_1'}([0,T];L^{r_1'})}\\
&\le  \left\|W^p \ast  |\phi^y|^{2}
  \right\|_{L^{k}([0,T];L^{2p})} \|\phi^x\|_{L^{q_1}([0,T];L^{r_1})}\\
&\le  \left\|W^p \ast  |\phi^y|^{2}
  \right\|_{L^{k}([0,T];L^{2p})} \|\phi^x\|_{S([0,T])},
\end{align*}
where $k$ is such that 
\begin{equation*}
  \frac{1}{q_1'}=\frac{1}{q_1}+\frac{1}{k} \Longleftrightarrow
  1=\frac{2}{q_1}+\frac{1}{k}. 
\end{equation*}
Note that since $q_1>4$, we have $q_1>2k$. In view of \eqref{eq:rp}, Young
inequality yields
\begin{align*}
  \left\|W^p \ast  |\phi^y|^{2}
  \right\|_{L^{k}([0,T];L^{2p})}& \le \|W^p\|_{L^p} \left\| |\phi^y|^{2}
  \right\|_{L^{k}([0,T];L^{r_1/2})} = \|W^p\|_{L^p} \| \phi^y
  \|_{L^{2k}([0,T];L^{r_1})}^2 \\
&\le T^\eta \|W^p\|_{L^p} \| \phi^y
  \|_{L^{q_1}([0,T];L^{r_1})}^2 \le  T^\eta \|W^p\|_{L^p} \| \phi^y
  \|_{S([0,T])}^2, 
\end{align*}
where $\eta>0$ is given by $\eta = 1/(2k)-1/q_1$.
\smallbreak

The same inequalities obviously holds
 by switching $x$ and $y$, and so for $T>0$ sufficiently small,
 $\Phi:=(\phi^x,\phi^y)\mapsto
 (F_1(\phi^x,\phi^y),F_2(\phi^x,\phi^y))=:\mathbf F(\Phi)$ leaves $Y(T)$
 invariant. 
\smallbreak

Using similar estimates, again relying on Strichartz and H\"older
inequalities involving the same Lebesgue exponents ($\mathbf F$ is the sum
of a linear and a trilinear term in $\Phi$), we infer that up
to decreasing $T>0$, $\mathbf F$ is a contraction on $Y(T)$, and so
there exists  a unique $\Phi\in 
Y(T)$ solving \eqref{eq:system}. The global existence of the
solution for \eqref{eq:system} follows from the conservation
of the $L^{2}$-norms of $\phi^x$ and $\phi^y$, respectively. 
\smallbreak

Uniqueness of such solutions follows once again from Strichartz and H\"older
inequalities involving the same Lebesgue exponents as above, like for
the contraction part of the argument. The main remark consists in
noticing that the above Lebesgue indices satisfy $r>r_1$, hence
$q<q_1$, and so
$L_{\rm loc}^{q_1}L^{r_1}\subset L_{\rm loc}^q L^r\cap L^\infty L^2$. 
\end{proof}

\begin{remark}[$H^1$-regularity]\label{rem:coulombH1}
  If in Theorem~\ref{theo:CoulombStrichartz}, we assume in addition
  that 
  \begin{equation*}
    \nabla v_1,\nabla v_2\in L^p(\R^3)+L^\infty(\R^3) \quad \text{form
      some }p>3/2,
  \end{equation*}
then for $\phi_0^x,\phi_0^y\in H^1(\R^3)$ and $x\phi_0^x,y\phi_0^y\in
L^2(\R^3)$ (this last assumption may be removed when $\nabla V_1,\nabla
V_2\in L^\infty(\R^3)$ -- the minimal assumption to work at the
$H^1$-level with $V_1,V_2\in \mathcal Q$  is $\phi_0^x \nabla V_1,\phi_0^y\nabla V_2\in
L^2(\R^3)$, see \cite{Ca15}), the global solution
constructed in Theorem~\ref{theo:CoulombStrichartz} satisfies 
\begin{equation*}
  (\phi^x,\phi^y)\in C(\R;H^1(\R^3))^2\cap
  L^q_{\rm loc}(\R;W^{1,r}(\R^3))^2,\quad (x\phi^x,y\phi^y)\in C(\R;L^2(\R^3)^3)^2.
\end{equation*}
To see this, it suffices to resume the above proof, and check that
$\nabla_x F_1(\phi^x,\phi^y)$ and $ \nabla_y F_2(\phi^x,\phi^y)$
satisfy essentially the same estimates as $F_1(\phi^x,\phi^y), F_2(\phi^x,\phi^y)$
in $S([0,T])$. One first has to commute the gradient with
$e^{-it\mathbf H_x}$ or $e^{-it\mathbf H_y}$. Typically,
\begin{align*}
  \nabla_x F_1 (\phi^x,\phi^y) &= e^{-it\mathbf H_x}\nabla_x\phi_0^x-
i\int_0^{t}e^{-i(t-\tau)\mathbf H_x}\nabla_x \(v_1\phi^x+ \(W \ast
                               |\phi^y|^{2}\)\phi^x\)(\tau)d\tau\\
&\quad -
i\int_0^{t}e^{-i(t-\tau)\mathbf H_x}F_1 (\phi^x,\phi^y) (\tau)\nabla_x V_1d\tau,
\end{align*}
where the last factor accounts for the possible lack of commutation
between $\mathbf H_x$ and $\nabla_x$, $[-i\d_t-\mathbf
H_x,\nabla_x]=\nabla_x V_1$. Since $V_1$ is at most
quadratic, $\nabla V_1$ is at most linear, and we obtain a closed
system of estimates by considering
\begin{align*}
  x F_1 (\phi^x,\phi^y) &= e^{-it\mathbf H_x}(x\phi_0^x)-
i\int_0^{t}e^{-i(t-\tau)\mathbf H_x} \(x \(v_1\phi^x+ \(W \ast
                               |\phi^y|^{2}\)\phi^x\)\)(\tau)d\tau\\
&\quad +
i\int_0^{t}e^{-i(t-\tau)\mathbf H_x}\nabla_xF_1 (\phi^x,\phi^y) (\tau)d\tau,
\end{align*}
where we have used $[-i\d_t-\mathbf H_x,x]= -\nabla_x$. We omit the
details, and refer to \cite{CazCourant} (see also \cite{Ca11}). As pointed in
Remark~\ref{rem:energy}, the energy 
\begin{align*}
  E(t)&=\(H_x\phi^x(t),\phi^x(t)\)_{L^2_x}+
        \(H_y\phi^y(t),\phi^y(t)\)_{L^2_y}\\
  &\quad
+  \iint_{\R^{3}\times\R^{3}}W(x-y)|\phi^x(t,x)|^2|\phi^y(t,y)|^2dxdy,
 \end{align*}
which is well defined with the above regularity, is independent of
time. Formally, this can be seen by multiplying the first equation in
\eqref{eq:system} by $\d_t \phi^x$, the second by $\d_t \phi^y$,
integrating in space, considering the real part, and summing the two
identities. To make the argument rigorous (we may not have enough
regularity to be allowed to proceed as described), one may use a
regularization procedure as in \cite{CazCourant}, or rely on a clever
use of the regularity provided by Strichartz estimates, as in
\cite{Ozawa2006}. 
\end{remark}


\bibliographystyle{abbrv}
\bibliography{hartree}
\end{document}